\def\NZQ{\mathbb}               % the font for N,Z,Q,R,C
\def\NN{{\NZQ N}}
\def\frk{\mathfrak}               % font for "Fraktur"
\def\Phi{{\frk N}}
\def\G{\Gamma}
\def\opn#1#2{\def#1{\operatorname{#2}}} % to make operators
\opn\chara{char} 
\opn\length{\ell} 
\opn\pd{pd} 
\opn\rk{rk}
\opn\projdim{proj\,dim} 
\opn\injdim{inj\,dim} 
\opn\rank{rank}
\opn\depth{depth} 
\opn\grade{grade} 
\opn\height{height}
\opn\embdim{emb\,dim} 
\opn\codim{codim}
\opn\Tr{Tr} 
\opn\bigrank{big\,rank}
\opn\superheight{superheight}
\opn\lcm{lcm}
\opn\trdeg{tr\,deg}%\emph{
\opn\G{\mathcal{G}}
\opn\reg{reg} 
\opn\lreg{lreg} 
\opn\ini{in} 
\opn\lpd{lpd}
\opn\size{size}
\opn\mult{mult}
\opn\dist{dist}
\opn\cone{cone}
\opn\lex{lex}
\opn\rev{rev}
\opn\im{im}
\opn\m{m}
\opn\div{div} \opn\Div{Div} \opn\cl{cl} \opn\Cl{Cl}
\opn\Spec{Spec} \opn\Supp{Supp} \opn\supp{supp} \opn\Sing{Sing}
\opn\Ass{Ass} \opn\Min{Min}
\opn\Ann{Ann} \opn\Rad{Rad} \opn\Soc{Soc}
\opn\Syz{Syz} \opn\Im{Im} \opn\Ker{Ker} \opn\Coker{Coker}
\opn\Am{Am} \opn\Hom{Hom} \opn\Tor{Tor} \opn\Ext{Ext}
\opn\End{End} \opn\Aut{Aut} \opn\id{id} \opn\ini{in}
\opn\nat{nat}
\opn\pff{pf}%   \pf exists already
\opn\Pf{Pf} \opn\GL{GL} \opn\SL{SL} \opn\mod{mod} \opn\ord{ord}
\opn\Gin{Gin}
\opn\Hilb{Hilb}\opn\adeg{adeg}\opn\std{std}\opn\ip{infpt}
\opn\Pol{Pol}
\opn\sat{sat}
\opn\Var{Var}
\opn\Gen{Gen}
\opn\ch{char}
\opn\aff{aff} \opn\con{conv} \opn\relint{relint} \opn\st{st}
\opn\lk{lk} \opn\cn{cn} \opn\core{core} \opn\vol{vol}
\opn\link{link} \opn\star{star}
\opn\gr{gr}
\def\pot#1#2{#1[\kern-0.28ex[#2]\kern-0.28ex]}
\opn\dirlim{\underrightarrow{\lim}}
\opn\inivlim{\underleftarrow{\lim}}
\let\to=\rightarrow
\def\Implies{\ifmmode\Longrightarrow \else
        \unskip${}\Longrightarrow{}$\ignorespaces\fi}
\def\implies{\ifmmode\Rightarrow \else
        \unskip${}\Rightarrow{}$\ignorespaces\fi}
\def\iff{\ifmmode\Longleftrightarrow \else
        \unskip${}\Longleftrightarrow{}$\ignorespaces\fi}
\newtheorem{Theorem}{Theorem}[section]
\newtheorem{Lemma}[Theorem]{Lemma}
\newtheorem{Definition}[Theorem]{Definition}
\newtheorem{Conjecture}[Theorem]{Conjecture}
\let\epsilon\varepsilon
\let\phi=\varphi
\let\kappa=\varkappa
\def\qed{\ifhmode\textqed\fi
      \ifmmode\ifinner\quad\qedsymbol\else\dispqed\fi\fi}
\def\textqed{\unskip\nobreak\penalty50
       \hskip2em\hbox{}\nobreak\hfil\qedsymbol
       \parfillskip=0pt \finalhyphendemerits=0}
\def\dispqed{\rlap{\qquad\qedsymbol}}
\opn\dis{dis}
\opn\height{height}
\opn\dist{dist}
\def\pnt{{\raise0.5mm\hbox{\large\bf.}}}
\opn\Lex{Lex}
\begin{document}
	
\title{Relation between regularity of powers of edge ideals and $(\im, \reg)$-invariant extension}
\author{Hiroju Kanno}

\address{Hiroju Kanno, 
	Department of Pure and Applied Mathematics,
	Graduate School of Information Science and Technology,
	Osaka University, Suita, Osaka 565-0871, Japan}
\email{u825139b@ecs.osaka-u.ac.jp}

%\subjclass[2010]{05C69, 05C70, 05E40, 13D40, 13H10}
\keywords{Castelnuovo--Mumford regularity, edge ideal, gap-free graph, $(\im, \reg)$-invariant extension}
\begin{abstract}
	In this paper, we define $(\im, \reg)$-invariant extension of graphs and propose a new approach for Nevo and Peeva's conjecture which said that for any gap-free graph $G$ with $\reg(I(G)) = 3$ and for any $k \geq 2$, $I(G)^k$ has a linear resolution. 
	Moreover, we consider new conjectures related to the regularity of powers of edge ideals of gap-free graphs. 
\end{abstract}
\maketitle
\section{INTRODUCTION}

Let $R = K[x_1, \ldots, x_n]$ denote the polynomial ring in $n$ variables over a field $K$ with each $\deg x_i = 1$ and $M$ is a finitely generated graded $S$-module. 
The graded minimal free resolution of $M$ is of the form 
\[
\mathbb{F}_{M} : 0 \to \bigoplus_{j \ge 0} R(-j)^{\beta_{p, j}(M)} \to \cdots \to \bigoplus_{j \ge 0} R(-j)^{\beta_{0, j}(M)} \to M \to 0. 
\]

Suppose the graded minimal free resolution of $M$ is the above form. $\beta_{i, j}(M)$ is called $\{i, j\}$-th graded Betti number of $M$ and we denote the ({\em Castelnuovo--Mumford}\,) {\em regularity} by $\reg(M)$, 
the {\em projective dimension} of $M$ by $\pd(M)$. 	
\[\reg(M) = \{j-i : \beta_{i, j}(M) \neq 0\}. \]
\[\pd(M) = \{i : \beta_{i, j}(M) \neq 0\} = p. \]
If there exists the integer $d$ such that $\beta_{i, j}(M) = 0$ for all $j \neq i + d$ (i.e., $\reg(M) = d$), we call $M$ has a {\em linear resolution}. 
  
Let $G = (V(G), E(G))$ be a finite simple graph on the vertex set $V(G) = \{x_{1}, \ldots, x_{n}\}$ with the edge set $E(G)$.  Let $R = K[V(G)] = K[x_{1}, \ldots, x_{n}]$. The {\em edge ideal} of $G$ is the ideal 
\[
I(G) = \left( x_{i}x_{j} : \{x_{i}, x_{j}\} \in E(G) \right) \subset R. 
\] 

The study of the relationship between $\reg(I(G))$ and combinatorial invariant of $G$ is a central problem in combinatorial commutative algebra. In particular, the study of the regularity of the powers of edge ideals is 
a current trend. See \cite{B1}, \cite{BHT}, \cite{E2}, \cite{HHZ}, \cite{NP}. 

In this work, we define $(\im, \reg)$-{\em invariant extension} of graphs, 
obtain the partially result for the famous conjecture \ref{NPconj} and propose several conjectures.

\begin{Conjecture}\cite{NP}\label{NPconj}
	Let $G$ be a gap-free graph and $I(G)$ its edge ideal. Suppose $\reg(I(G)) = 3$, is it true that $I(G)^s$ has a linear resolution for all $s \geq 2$? 
\end{Conjecture}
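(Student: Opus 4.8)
The plan is to proceed by induction on $s$ using Banerjee's colon-ideal machinery, which reduces the linearity of $I(G)^{s+1}$ to the linearity of the lower power together with regularity control of certain colon ideals. Concretely, writing $I = I(G)$, one has the inductive estimate
\[
\reg(I^{s+1}) \le \max\bigl\{\, \reg\bigl(I^{s+1} : e_1\cdots e_s\bigr) + 2s,\ \reg(I^{s}) \,\bigr\},
\]
where the inner maximum ranges over all products $e_1\cdots e_s$ of $s$ (not necessarily distinct) edges of $G$. Since $I^{s+1}$ is generated in degree $2(s+1)$, a linear resolution is equivalent to $\reg(I^{s+1}) = 2s+2$, and the lower bound $\reg(I^{s+1}) \ge 2s+2$ is automatic. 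The term $\reg(I^s)$ is harmless: it equals $3 \le 4$ in the base step $s=1$ and equals $2s \le 2s+2$ for $s\ge 2$ by the inductive hypothesis. Hence everything reduces to proving that every colon ideal $I^{s+1} : e_1\cdots e_s$ has regularity exactly $2$, i.e. has a linear resolution.

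The second step is to identify these colon ideals combinatorially. By Banerjee's structure theorem, $I^{s+1} : e_1\cdots e_s$ is generated in degree $\le 2$, and after absorbing any degree-one generators (which can only lower the regularity, by passing to the corresponding vertex-deleted graph) its degree-two part is the edge ideal of the \emph{even-connection graph} $G'$ attached to the chosen product of edges. Crucially, even-connection preserves gap-freeness: if $G$ is gap-free, then each such $G'$ is again gap-free. Thus the problem is transported from $G$, with $\reg I(G) = 3$, to a family of gap-free graphs $G'$, and one must show that each has $\reg I(G') = 2$, equivalently (by Fröberg's theorem) that each complement $\overline{G'}$ is chordal.

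This is where the new invariant, the $(\im,\reg)$-invariant extension, is intended to enter. The strategy is to exhibit the even-connection operation, starting from a gap-free graph with $\reg I(G) = 3$, as a sequence of such invariant extensions, which by construction keep both the relevant induced structure and the regularity pinned at the target value. If one can show that every $G'$ arising as an even-connection graph of $G$ is obtained from a graph of known regularity by such extensions, then $\reg I(G') \le 2$ would follow, and combined with the trivial lower bound this would force every colon ideal to have a linear resolution, closing the induction.

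The main obstacle is precisely this last step: controlling the regularity of the even-connection graphs. Gap-freeness is preserved, but it is far from sufficient to force a linear resolution, and even-connection need not preserve the equality $\reg = 3$; what is required is the strictly stronger conclusion $\reg I(G') = 2$. One must therefore exploit the hypothesis $\reg I(G) = 3$ in an essential way — most plausibly by showing that the new even-connected neighbourhoods created inside $G'$ cannot contain the induced obstructions (a long induced cycle, or the analogous configuration detected by $\im$) that would raise the regularity above $2$. Making this structural dichotomy precise, and verifying that the $(\im,\reg)$-invariant extension genuinely captures it, is the crux on which the entire argument rests.
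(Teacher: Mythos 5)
There is a genuine gap, and in fact the statement you are trying to prove is an open conjecture: the paper does not prove Conjecture \ref{NPconj} either. It only establishes (i) a conditional reduction, Theorem \ref{Theoconj}, showing that the new Conjecture \ref{Newconj1} on $(\im,\reg)$-invariant extensions implies Conjecture \ref{NPconj} by induction on $|V(G)| - n_G$, where the base case is an anticycle (handled by Banerjee's gap-free and cricket-free theorem), and (ii) an unconditional special case, Theorem \ref{main2}, where the one-vertex extension is an $S$-suspension; there the Betti splitting $I(G^S)^k = I(G)^k + JI(G^S)^{k-1}$ together with Lemma \ref{keylemma} does the work. Your proposal instead follows Banerjee's colon-ideal/even-connection strategy, which is a legitimate and standard line of attack, but your own final paragraph concedes the decisive point: you must show that every even-connection graph $G'$ arising from a gap-free $G$ with $\reg(I(G)) = 3$ satisfies $\reg(I(G')) = 2$, i.e.\ has co-chordal structure. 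Gap-freeness of $G'$ is known (Banerjee), but gap-freeness only gives $\im(G') = 1$, which by the bounds quoted in the paper yields $\reg(I(G')) \geq 2$ and nothing from above; it does not force a linear resolution. No argument is offered for why the hypothesis $\reg(I(G)) = 3$ should propagate to the bound $\reg(I(G')) \leq 2$, and this is precisely the step that has resisted proof in the literature.

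A secondary concern: the invocation of $(\im,\reg)$-invariant extensions in your third step does not match how the paper uses them. In the paper an $(\im,\reg)$-invariant extension adds a single vertex while keeping $\im$ and $\reg$ fixed, and is used to peel $G$ down to a minimal induced anticycle; it is not a mechanism for analysing even-connection graphs, which live on the same vertex set as $G$ but with additional edges. If you want to pursue the even-connection route, you would need a genuinely new structural result about the complements of even-connection graphs of regularity-$3$ gap-free graphs, not a reduction to the paper's extension framework. As written, the proposal is a research programme with its hardest step left open, not a proof.
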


The following is our main result. 

\begin{Theorem}(Theorem\ref{main2})
	Let $G$ be a gap-free graph and $S \subset V(G)$ be an independent set of $G$, and $G^S$ be an $S$-suspension of $G$. If $I(G)^k$ has a linear resolution for all $k \geq 2$, 
	then $I(G^S)^k$ has a linear resolution for all $k \geq 2$.  
\end{Theorem}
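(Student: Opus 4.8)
My plan is to peel off the apex vertex of the suspension and reduce the theorem to the linearity of a family of products of powers of $I(G)$ with the independent-set ideal. Write $w$ for the new vertex, so that $V(G^S)=V(G)\cup\{w\}$ and $E(G^S)=E(G)\cup\{\{w,x\}:x\in S\}$, and set $R'=R[w]$, $I=I(G)$, and $L=(x:x\in S)\subseteq R$; then $I(G^S)=I+wL$ in $R'$. Since $w$ occurs in neither $I$ nor $L$, the ideal binomial expansion gives
\[ I(G^S)^k=(I+wL)^k=\sum_{j=0}^{k}w^{\,j}L^{j}I^{k-j}. \]
First I would record the two identities that drive the induction. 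Reducing modulo $w$ kills every summand with $j\ge 1$, so $(I(G^S)^k,w)=(I^k,w)$ and hence $\reg\bigl((I(G^S)^k,w)\bigr)=\reg(I^k)$. A direct monomial computation gives the colon identity $(I(G^S)^k:w)=L\,I(G^S)^{k-1}$, and more generally
\[ (L^{a}I(G^S)^{b}:w)=L^{a+1}I(G^S)^{b-1},\qquad (L^{a}I(G^S)^{b},w)=(L^{a}I^{b},w). \]
Note that neither identity uses independence of $S$.

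Next I would iterate the short exact sequence
\[ 0\to\bigl(R'/(J:w)\bigr)(-1)\xrightarrow{\;\cdot w\;}R'/J\to R'/(J,w)\to 0 \]
along the chain $M_j:=L^{j}I(G^S)^{k-j}$ for $j=0,1,\dots,k$, with $M_0=I(G^S)^k$ and $M_k=L^{k}$. Each step yields $\reg M_j\le\max\{\reg M_{j+1}+1,\ \reg(L^{j}I^{k-j})\}$, and since $\reg(L^{k})=k$, an easy downward induction on $j$ gives
\[ \reg\bigl(I(G^S)^k\bigr)\le\max\Bigl(\{2k\}\cup\{\reg(L^{j}I^{k-j})+j:0\le j\le k-1\}\Bigr). \]
The $j=0$ term equals $\reg(I^k)=2k$ by hypothesis. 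Because $L^{j}I^{k-j}$ is generated in degree $2k-j$, one always has $\reg(L^{j}I^{k-j})+j\ge 2k$, so the displayed bound collapses to exactly $2k$ precisely when every product $L^{j}I^{k-j}$ has a linear resolution; as $I(G^S)^k$ is generated in degree $2k$ the reverse inequality is automatic, and the theorem follows.

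This reduces the theorem to the following statement, which I expect to carry all the real content: for $G$ gap-free, $S$ independent, and $0\le j\le k-1$, the product $L^{j}I(G)^{k-j}$ has a linear resolution. The extreme case $j=k-1$ is the crux, since there one must prove that $L^{k-1}I(G)$ is linear even though $I(G)$ itself has regularity $3$ and is \emph{not} linear. I would prove this by a second induction of the same shape, now peeling off the variables $x_s$ with $s\in S$ through the colon/quotient sequence, so that $\reg(L^{a}I^{m})$ gets expressed through the regularities of edge ideals of induced subgraphs of $G$ and of the even-connection graphs attached to $I(G)$ in Banerjee's sense; gap-freeness of $G$ is what keeps those colon ideals equal to edge ideals of gap-free graphs, while independence of $S$ is what prevents the extra generators coming from $L$ (no $x_sx_{s'}$ with $s,s'\in S$ lies in $I(G)$) from creating nonlinear syzygies.

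The hard part is exactly this last lemma, and it is genuinely delicate: the truncation $\mm\,I(G)$ is linear simply because $\reg I(G)\le 3$, but passing to the proper subideal $L\,I(G)\subseteq\mm\,I(G)$ can raise regularity in general, so its linearity cannot follow from the power hypothesis alone and must be forced by the special position of the independent set $S$ inside the gap-free graph $G$. I therefore expect the apex-peeling reduction above to be routine, and essentially all of the difficulty, together with the precise role of the hypotheses and in particular the input $\reg I(G)\le 3$, to be concentrated in establishing the product-linearity of $L^{j}I(G)^{k-j}$.
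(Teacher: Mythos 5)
There are two problems here, and together they mean the proposal does not establish the theorem. First, you have the definition of the $S$-suspension reversed: in this paper the new vertex is joined to every vertex \emph{not} in $S$, so $I(G^S)=I(G)+w\,L$ with $L=(x_i : x_i\notin S)$, not $L=(x : x\in S)$. This is not cosmetic. With your $L$ (generated by the independent set) the colon identity $(L^aI(G^S)^b:w)=L^{a+1}I(G^S)^{b-1}$ is false: computing the colon termwise leaves the extra summand $L^aI^b$, and $L^aI^b\not\subseteq L^{a+1}I(G^S)^{b-1}$ precisely because an independent set is not a vertex cover. With the correct $L$ the identity does hold, but only because independence of $S$ makes $V(G)\setminus S$ a vertex cover, so that $I(G)\subseteq L\mm$ --- the exact opposite of your remark that ``neither identity uses independence of $S$.'' The entire role of the hypothesis on $S$ in this theorem is to make $L$ a vertex cover ideal, and your later speculation about why independence matters (``no $x_sx_{s'}$ with $s,s'\in S$ lies in $I(G)$'') is aimed at the wrong ideal.

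Second, and more seriously, even after the definition is corrected your argument is only a reduction: everything is funnelled into the claim that $L^{j}I(G)^{k-j}$ has a linear resolution for $1\le j\le k-1$, which you explicitly leave open and yourself identify as carrying ``all the real content.'' That claim \emph{is} the theorem; deferring it leaves a genuine gap. The paper closes exactly this gap by a shorter route: it splits $I(G^S)^k=I(G)^k+JI(G^S)^{k-1}$ with $J=x_0L$ into just two pieces, invokes the Betti-splitting criterion of Lemma \ref{doublelinear} so that only the linearity of $JI(G^S)^{k-1}$ --- equivalently, after coloning out $x_0$ via Lemma \ref{exact}, of the single ideal $T=LI(G^S)^{k-1}$ --- is needed, and then bounds $\reg(T)$ by Banerjee's method: Lemma \ref{keylemma} shows that for a vertex cover $U$ the colon ideals of $UI(G)^{k}$ by the generators of $I(G)^{k}$ are generated by variables, and this feeds into Theorem \ref{Blemma} to give $\reg(T)\le 2k-1$. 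Your full binomial expansion into $k+1$ layers $L^{j}I^{k-j}$ actually creates \emph{more} unproved obligations than the paper's two-term splitting, and none of them is a routine verification: each one requires an analogue of Lemma \ref{keylemma}, which is where the vertex-cover structure of $V(G)\setminus S$ does all the work. Until you supply that lemma (or its analogue for every $j$), the proposal is a plausible reduction but not a proof.
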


We explain why our result relates Conjecture \ref{NPconj} directly. 
For this, we define $(\im, \reg)$-invariant extension of graphs which is a generalization of $S$-suspension. 

\begin{Definition}\upshape
	Let $G=(V(G), E(G))$ and $G'=(V(G'), E(G'))$ be graphs. 
	If $G'$ satisfies the following two conditions, we call $G'$ an $(\im, \reg)$-{\em invariant extension} of $G$.
	\begin{enumerate}
		\item $V(G') = V(G) \cup \{x_{n+1}\}$ where $x_{n+1}$ is a new vertex, and $G'_{V(G)}$ is $G$. 
		\item $\im(G') = \im(G)$ and $\reg(I(G)) = \reg(I(G'))$. 
	\end{enumerate}
\end{Definition}

The following new Conjecture \ref{Newconj1} of $(\im, \reg)$-invariant extension follows Conjecture \ref{NPconj}. (See Theorem \ref{Theoconj})

\begin{Conjecture}\label{Newconj1}
	Let $G$ and $G'$ be gap-free graphs such that $G'$ is $(\im, \reg)$-invariant extension of $G$. 
	If $\reg(I(G)^k) = 2k$ for all $k \geq 2$, 
	is it true that $\reg(I(G')^k) = 2k$ for all $k \geq 2$ ?
\end{Conjecture}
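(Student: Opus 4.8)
My plan is to approach Conjecture \ref{Newconj1} by first isolating the mechanism that makes the proven suspension case (Theorem \ref{main2}) work, and then asking what survives for an arbitrary $(\im, \reg)$-invariant extension. In both settings $G'$ is obtained from $G$ by adjoining a single new vertex $w := x_{n+1}$; write $T := N_{G'}(w) \subseteq V(G)$ and $J := (x_t : t \in T)$ for the ideal generated by the neighbours of $w$. Since $w$ lies on no edge of $G$, we have $I(G') = I(G) + wJ$, and expanding gives
\[
I(G')^k = \sum_{a=0}^{k} w^a J^a I(G)^{k-a}.
\]
The lower bound $\reg(I(G')^k) \ge 2k$ is automatic because $I(G')^k$ is generated in degree $2k$, so the whole task is the upper bound $\reg(I(G')^k) \le 2k$, which I would prove by induction on $k \ge 2$ through the short exact sequence
\[
0 \to \bigl(R/(I(G')^k : w)\bigr)(-1) \xrightarrow{\ \cdot w\ } R/I(G')^k \to R/(I(G')^k, w) \to 0.
\]

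The two outer terms are computed as follows. Killing $w$ yields $(I(G')^k, w) = (I(G)^k, w)$, whence $\reg\bigl(R/(I(G')^k, w)\bigr) = \reg(I(G)^k) - 1 = 2k - 1$ by the hypothesis on $G$. The colon is the heart of the matter: a direct computation from the expansion above gives
\[
I(G')^k : w = J\,I(G')^{k-1} + I(G)^k .
\]
In the suspension case the new vertex is joined to a vertex cover $T = V(G) \setminus S$ of $G$ (this is exactly the statement that $S$ is independent), so every edge of $G$ meets $T$ and hence $I(G) \subseteq J$. Consequently $I(G)^k \subseteq J\,I(G')^{k-1}$, the degree-$2k$ summand is absorbed, and the colon $I(G')^k : w = J\,I(G')^{k-1}$ is generated purely in degree $2k-1$. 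Feeding this back, the sequence reduces the upper bound to the single inequality $\reg\bigl(J\,I(G')^{k-1}\bigr) \le 2k-1$, i.e. to the assertion that the colon ideal has a linear resolution.

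For $k \ge 3$ I would close the induction using the Conca--Herzog estimate that multiplying by an ideal generated by linear forms raises regularity by at most one: this gives $\reg\bigl(J\,I(G')^{k-1}\bigr) \le \reg\bigl(I(G')^{k-1}\bigr) + 1 = 2(k-1) + 1 = 2k-1$, where $\reg(I(G')^{k-1}) = 2(k-1)$ is the inductive hypothesis. The delicate point is the base case $k = 2$: here $\reg(I(G')) = \reg(I(G))$ may equal $3$ (the Nevo--Peeva regime of Conjecture \ref{NPconj}), so the generic estimate only yields $\reg(J\,I(G')) \le 4$, whereas we need $\reg(J\,I(G')) \le 3$. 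I would prove that $J\,I(G')$ nevertheless has a linear resolution through Banerjee's even-connection description of $I(G')^2 : e$, showing that each even-connection graph of $G'$ arising from an edge $e$ reduces to even-connection data of $G$ together with the single vertex $w$, and then extracting co-chordality of these graphs from the hypothesis $\reg(I(G)^2) = 4$. This base-case transfer is the technical core of Theorem \ref{main2}, and I expect it to be the most laborious step.

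The principal obstacle to the full Conjecture \ref{Newconj1} is that the decisive simplification $I(G) \subseteq J$ is special to suspensions. For a general $(\im, \reg)$-invariant extension the neighbourhood $T$ need not be a vertex cover, so the degree-$2k$ term $I(G)^k$ in the colon is no longer absorbed; since $\reg\bigl(R/I(G)^k\bigr) = 2k-1$, this surviving summand defeats the bound produced by the short exact sequence, and the single-variable colon argument collapses. All that the invariance hypotheses $\im(G') = \im(G)$ and $\reg(I(G')) = \reg(I(G))$ guarantee is the absence of a new obstruction \emph{at the first power}, and there is at present no mechanism forcing this to propagate to the even-connection graphs that govern the higher powers. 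Converting first-power invariance into uniform control of the colon ideals $I(G')^k : m$ for all products of edges $m$ is precisely what the conjecture demands, and is the reason it remains open rather than proved.
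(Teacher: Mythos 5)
You have not proved the statement, but neither does the paper: Conjecture \ref{Newconj1} is posed as an open conjecture, and the only things the paper actually establishes about it are the implication of Theorem \ref{Theoconj} (Conjecture \ref{Newconj1} implies Conjecture \ref{NPconj}) and the special case in which the $(\im,\reg)$-invariant extension is an $S$-suspension (Theorems \ref{main1} and \ref{main2}). Your proposal is an honest analysis rather than a proof, and its diagnosis is exactly right: the decisive feature of the suspension case is that the new vertex is joined to a vertex cover $T = V(G)\setminus S$, so that $I(G)\subseteq J$ and the degree-$2k$ summand $I(G)^k$ is absorbed into $J\,I(G')^{k-1}$, while for a general $(\im,\reg)$-invariant extension the hypotheses only constrain the first power and give no control over the colon ideals of higher powers. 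Where your sketch does prove something, it recovers Theorem \ref{main1} by a different route: you work with the short exact sequence coming from multiplication by the new vertex $w$ and the computation $I(G')^k : w = J\,I(G')^{k-1} + I(G)^k$, whereas the paper uses the Betti splitting $I(G^S)^k = I(G)^k + JI(G^S)^{k-1}$ via Lemma \ref{doublelinear}, then bounds $\reg\bigl((x_i : x_i\notin S)I(G^S)^{k-1}\bigr)$ with Banerjee's estimate and Lemma \ref{keylemma}, which says that colons of $UI(G)^k$ by generators are generated by variables whenever $U$ is a vertex cover. The two routes are close in spirit (both reduce to showing the colon ideal has a linear, i.e.\ degree $2k-1$, resolution), and yours has the merit of making the role of the vertex-cover condition completely transparent.

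Two concrete cautions. First, your inductive step for $k\ge 3$ leans on ``the Conca--Herzog estimate that multiplying by an ideal generated by linear forms raises regularity by at most one,'' i.e.\ $\reg(JM)\le \reg(M)+1$ for $J$ generated by variables and $M$ arbitrary; this is not available in that generality (the known bounds of this type require extra hypotheses such as $\dim R/J\le 1$, and the general inequality $\reg(IM)\le\reg(I)+\reg(M)$ fails). The paper avoids this by using the vertex-cover structure directly: Lemma \ref{keylemma} shows every colon $(T:L_j)$ is generated by variables, and Theorem \ref{Blemma} then forces $\reg(T)\le 2k-1$. If you pursue your version, you should replace the Conca--Herzog appeal by an argument of this kind. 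Second, and more importantly, nothing in your proposal (or in the paper) bridges the gap from the suspension case to an arbitrary $(\im,\reg)$-invariant extension; as you yourself observe, the surviving summand $I(G)^k$ in the colon, of regularity $2k$, defeats the bound $2k-1$ needed from the exact sequence, so the conjecture remains genuinely open and your text should not be read as a proof of it.
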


\begin{Theorem}\label{Theoconj}
	Conjecture \ref{Newconj1} implies Conjecture \ref{NPconj}. 
\end{Theorem}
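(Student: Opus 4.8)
The plan is to deduce Conjecture \ref{NPconj} from Conjecture \ref{Newconj1} by induction on the number of vertices, with $(\im,\reg)$-invariant extensions as the inductive step and complements of chordless cycles as the base cases. The first move is a reformulation. Since $I(G)$ is generated in degree $2$, the power $I(G)^s$ is generated in the single degree $2s$, and an ideal generated in one degree $d$ has a linear resolution if and only if its regularity equals $d$. Thus ``$I(G)^s$ has a linear resolution'' is equivalent to $\reg(I(G)^s)=2s$, and Conjecture \ref{NPconj} becomes the assertion that every gap-free $G$ with $\reg(I(G))=3$ enjoys the property $P(G)$ that $\reg(I(G)^k)=2k$ for all $k\ge 2$. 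This is exactly the conclusion appearing in Conjecture \ref{Newconj1}, so the task is to propagate $P$ from a set of known instances to all gap-free reg-$3$ graphs.

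For the inductive step I would use two standard facts. The class of gap-free ($2K_2$-free) graphs is hereditary, so $G\setminus v$ is gap-free whenever $G$ is; and regularity is monotone under induced subgraphs, $\reg(I(G\setminus v))\le\reg(I(G))$. Together with Katzman's bound $\reg(I(H))\ge\im(H)+1$ and the fact that a gap-free graph with an edge has $\im=1$, this forces $\reg(I(G\setminus v))\in\{2,3\}$ when $\reg(I(G))=3$, and by Fröberg's theorem $\reg(I(H))=3$ is equivalent to $\overline{H}$ being non-chordal. Now suppose $G$ admits a vertex $v$ with $\reg(I(G\setminus v))=3$. Then $H:=G\setminus v$ is a smaller gap-free reg-$3$ graph with $\im(H)=\im(G)=1$ and $\reg(I(H))=\reg(I(G))$, so $G$ is an $(\im,\reg)$-invariant extension of $H$. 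Assuming $P(H)$ by the induction hypothesis, Conjecture \ref{Newconj1} yields $P(G)$.

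It remains to identify and treat the base cases, namely the gap-free reg-$3$ graphs $G$ for which \emph{no} vertex $v$ satisfies $\reg(I(G\setminus v))=3$. By the reformulation these are exactly the $G$ with $\overline{G}$ non-chordal but $\overline{G}\setminus v$ chordal for every $v$, i.e.\ $\overline{G}$ is vertex-critically non-chordal. Since the minimal non-chordal induced subgraphs are precisely the chordless cycles $C_n$ ($n\ge 4$), criticality forces $\overline{G}=C_n$, so $G=\overline{C_n}$; gap-freeness excludes $n=4$, leaving $G=\overline{C_n}$ with $n\ge 5$. A short computation shows every one of these really is a base case of regularity $3$: the maximal independent sets of $\overline{C_n}$ correspond to the cliques of $C_n$, which have size $\le 2$, so the independence complex of $\overline{C_n}$ is the $1$-dimensional complex on the edges of $C_n$, homotopy equivalent to $C_n\simeq S^1$, whence $\reg(I(\overline{C_n}))=3$ for all $n\ge 5$. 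In particular $C_5=\overline{C_5}$ and the prism $\overline{C_6}$ are distinct base cases, and there are infinitely many.

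The main obstacle is precisely this base step: one must establish $P(\overline{C_n})$ \emph{unconditionally} for every $n\ge 5$, i.e.\ that all powers of $I(\overline{C_n})$ have linear resolutions. These graphs are themselves governed by the Nevo--Peeva phenomenon, so they cannot be assumed as instances of what we are proving and must be supplied by independent results (for $C_5$ this is available) or by a direct argument. I note that Theorem \ref{main2} cannot close this gap on its own: an $S$-suspension is a special $(\im,\reg)$-invariant extension and therefore preserves regularity, so suspensions can only move among reg-$3$ graphs and can never descend to a reg-$2$ ``ground'' from which the base family could be generated. Consequently, the heart of the reduction is a self-contained verification of $P$ for the complements of chordless cycles, and making that verification uniform in $n$ is the step I expect to be genuinely hard.
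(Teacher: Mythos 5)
Your reduction has exactly the same architecture as the paper's proof: reformulate ``linear resolution'' as $\reg(I(G)^k)=2k$, induct on the number of vertices, observe that deleting a suitable vertex from a gap-free graph of regularity $3$ yields a smaller gap-free graph of regularity $3$ of which $G$ is an $(\im,\reg)$-invariant extension, and identify the base cases as the anticycles $\overline{C_n}$, $n\ge 5$ (the paper indexes the induction by $m_G=|V(G)|-n_G$, where $n_G$ is the length of a shortest induced anticycle, which amounts to the same vertex-deletion argument). However, you leave the base case unresolved and declare it ``genuinely hard,'' and this is a genuine gap: without $P(\overline{C_n})$ for every $n\ge 5$ the induction has nothing to stand on, so as written the proposal does not prove the theorem.

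The missing ingredient is Banerjee's theorem, which the paper states in its preliminaries and invokes at precisely this point: for every gap-free and cricket-free graph $G$ one has $\reg(I(G)^k)=2k$ for all $k\ge 2$. Anticycles of length $n\ge 5$ satisfy both hypotheses. They are gap-free because an induced $2K_2$ in $\overline{C_n}$ would be an induced $C_4$ in $C_n$, impossible for $n\ge 5$; and they are cricket-free because the complement of the cricket contains a vertex of degree $3$, while every induced subgraph of $C_n$ has maximum degree $2$. So the base case $P(\overline{C_n})$ holds unconditionally for all $n\ge 5$ by citation, not by a new uniform-in-$n$ argument. Your correct observation that Theorem \ref{main2} (the $S$-suspension result) cannot supply the base case is well taken, but the conclusion that the base step is an open obstacle is wrong; supplying this one reference completes your proof and makes it essentially identical to the paper's.
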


\begin{proof}
	For any gap-free graph $G$ such that $\reg(I(G)) = 3$, there exists 
	the integer $n_G \geq 5$ such that an anticycle of length $n$ is an induced subgraph of $G$ and $G$ has no induced anticycle of length less than $n_G$. Hence $|V(G)| \geq n_G$. We denote $|V(G)| - n_G$ by $m_G$.
	
	We prove by induction on $m_G$. 
	If $m_G = 0$, then $G$ is an anticycle of length $n_G$. 
	For all $l \geq 5$, an anticycle of length $l$ is gap-free and cricket-free.
	Then $I(G)^k$ has a linear resolution for all $k \geq 2$. 
	Suppose $m_G > 1$, there exists a graph $H$ such that 
	$H$ is an induced subgraph of $G$, $|V(H)| = |V(G)| - 1$, and $\reg(I(H)) = 3$. 
	By the assumption of the induction, $I(H)^k$ has a linear resolution for all $k \geq 2$.
	Therefore $I(G)^k$ has a linear resolution for all $k \geq 2$ by Conjecture \ref{Newconj1}. 
\end{proof}

\section{Preliminaries}
In this section, we fix notation and recall some known results. 
Firstly, we introduce the terms of graph theory. 
Throughout this paper, a graph $G$ denotes a finite simple graph with vertex set $V(G)$ and edge set $E(G)$, which has no isolated vertex. 

\begin{Definition}\upshape
	Let $G=(V(G), E(G))$ be a graph. 
	\begin{enumerate}
		\item A graph $G'$ is a subgraph of $G$ if $V(G) \subset V(G)$ and $E(G') \subset E(G)$. 
		\item A graph $G'$ is an {\em induced subgraph} of $G$ if $G'$ is a subgraph of $G$ and for any $x, y \in V(G')$, 
		$\{x, y\} \in E(G') \iff \{x, y\} \in E(G)$. 
		\item A graph $G^c$ is the complement of $G$ if $V(G)=V(G^c)$ and $\{x, y\} \in E(G^c) \iff \{x, y\} \notin E(G)$. 
	\end{enumerate}
\end{Definition}

We denote by $G_{W}$ the induced subgraph of $G$ such that $V(G_{W}) = W \subset V(G)$. 

\begin{Definition}\upshape
	Let $G=(V(G), E(G))$ be a graph. 
	\begin{enumerate}
		\item A subset $M = \{e_{1}, \ldots, e_{s} \} \subset E(G)$ is said to be a {\em matching} of $G$ if, for all $e_{i}$ and $e_{j}$ with $i \neq j$ belonging to $M$, one has $e_i \cap e_j = \emptyset$.  The {\em matching number} $\m(G)$ of $G$ is the maximum cardinality of the matchings of $G$. 
		\item A matching $M = \{e_{1}, \ldots, e_{s} \} \subset E(G)$ is said to be an {\em induced matching} of $G$ if for all $e_{i}$ and $e_{j}$ with $i \neq j$ belonging to $M$, there is no edge $e \in E(G)$ with $e \cap e_{i} \neq \emptyset$ and $e \cap e_{j} \neq \emptyset$.   
		The {\em induced matching number} $\im(G)$ of $G$ is 
			the maximum cardinality of the induced matchings of $G$. 
		\item A subset $S \subset V(G)$ is said to be an {\em independent set} of $G$ if $\{ x_{i}, x_{j} \} \not\in E(G)$ for all $x_{i}, x_{j} \in S$.  
	\end{enumerate}
\end{Definition}

Two disjoint edges $e_1, e_2$ in $G$ are said to be {\em gap} if for any edge $e \in E(G)$, $e \cap e_1 = \emptyset$ or $e \cap e_2 = \emptyset$.
 If $G$ has no gap, $G$ is called {\em gap-free}.
 $G$ is gap-free if and only if $\im(G) = 1$. 

\begin{Definition}\upshape
	Let $G=(V(G), E(G))$ be a graph. 
	\begin{enumerate}
		\item $G$ is called {\em cycle} if $V(G) = \{x_1, \ldots , x_n\}$ and 
		$E(G) = \{\{x_i, x_{i+1}\} : 1 \le i \le n, x_{1} = x_{n+1}\}$. Then $n$ is called the length of cycle.  
		\item $G$ is called {\em anticycle} if the complement of $G$ is cycle. 
		\item $G$ is called {\em chordal} graph if $G$ has no induced cycle of length $4$ or more. 
		\item $G$ is called {\em co-chordal} graph if $G^c$ is chordal graph. 
		\item $G$ is called {\em claw} graph if $V(G) = \{x_1, x_2, x_3, x_4\}, E(G) = \{\{x_1, x_2\}, \{x_1, x_3\}, \{x_1, x_4\}\}$. A graph without any induced claw is called a {\em claw-free} graph.
		\item $G$ is called {\em cricket} graph if $V(G) = \{x_1, x_2, x_3, x_4, x_5\}$, 
		
		$E(G) = \{\{x_1, x_2\}, \{x_1, x_3\}, \{x_1, x_4\}, \{x_1, x_5\}, \{x_4, x_5\}\}$.
		A graph without any induced cricket is called a {\em cricket-free} graph. 
	\end{enumerate}
\end{Definition}

If $G$ is a cricket-free graph then $G$ is a claw-free graph. 

\begin{Definition}\upshape
	Let $G=(V(G), E(G))$ be a graph and $x \in V(G)$. 
	\begin{enumerate}
		\item A subset $N_G(x) \subset V(G)$ is {\em neighborhood} of $x$ if 
		$N_G(x) = \{y \in V(G) : \{x, y\} \in E(G)\}$. 
		\item The degree of $x$ is $\deg_G(x) = |N_G(x)|$. 
	\end{enumerate}
\end{Definition}

Secondly, we note related known results. 

\begin{Theorem}\cite{F}
	Let $G$ be a graph. The followings are equivalent: 
	\begin{itemize}
		\item $I(G)$ has a linear resolution. In other words, $\reg(I(G)) = 2$. 
		\item The complement of $G$ is a chordal graph.  
	\end{itemize}
\end{Theorem}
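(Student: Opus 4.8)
This is Fröberg's theorem, and the plan is to translate the algebraic condition into the vanishing of reduced simplicial homology via Hochster's formula, and then read off both directions from the combinatorics of chordal graphs. First I would record that $I(G)$ is the Stanley--Reisner ideal of the independence complex $\Delta := \mathrm{Ind}(G)$, whose faces are exactly the independent sets of $G$. Since a vertex set is independent in $G$ precisely when it is a clique in $H := G^c$, we have $\Delta = \mathrm{Cl}(H)$, the clique (flag) complex of the complement, and restriction is compatible with taking induced subgraphs: $\Delta|_{W} = \mathrm{Cl}(H_W)$ for every $W \subseteq V(G)$. Hochster's formula $\beta_{i,j}(R/I(G)) = \sum_{|W| = j} \dim_K \tilde H_{j-i-1}(\Delta|_{W}; K)$ then yields $\reg(R/I(G)) = 1 + \max\{\ell : \tilde H_\ell(\mathrm{Cl}(H_W); K) \neq 0 \text{ for some } W\}$. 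Because $\reg(I(G)) = \reg(R/I(G)) + 1$ and $I(G) \neq 0$, the condition $\reg(I(G)) = 2$ is therefore equivalent to the purely topological statement that $\tilde H_\ell(\mathrm{Cl}(H_W); K) = 0$ for all $\ell \geq 1$ and all $W \subseteq V(G)$.

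For the implication that a linear resolution forces $H$ to be chordal I would argue by contraposition. If $H$ is not chordal, it contains an induced cycle $C$ of length $k \geq 4$; taking $W = V(C)$, the induced subgraph $H_W = C$ has no triangles, so $\mathrm{Cl}(H_W)$ is the $1$-dimensional cycle, which is homotopy equivalent to $S^1$ and hence has $\tilde H_1 \neq 0$. This contradicts the vanishing criterion, so $I(G)$ does not have a linear resolution.

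The converse is the substantive half. Assuming $H$ chordal, every induced subgraph $H_W$ is again chordal, so it suffices to prove that the clique complex of any chordal graph has vanishing reduced homology in positive degrees; I would establish the stronger fact that $\mathrm{Cl}(H_W)$ is contractible when $H_W$ is connected (and a disjoint union of contractible complexes in general, whose only reduced homology sits in degree $0$). The engine is the existence of a simplicial vertex $v$ --- one whose neighborhood is a clique --- which a chordal graph always has by its perfect elimination ordering; in a connected graph on at least two vertices such a $v$ has $N(v) \neq \emptyset$. For such $v$ the closed star of $v$ is the full simplex on $\{v\} \cup N(v)$ and the link of $v$ is the simplex $\mathrm{Cl}(N(v))$, so $\mathrm{Cl}(H_W)$ is obtained from $\mathrm{Cl}(H_W \setminus v)$ by attaching the contractible closed star along the contractible link; hence $\mathrm{Cl}(H_W) \simeq \mathrm{Cl}(H_W \setminus v)$, and induction on the number of vertices closes the argument (and shows the conclusion is characteristic-free). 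The step I expect to be the main obstacle is making this collapsing rigorous: verifying that gluing a cone along a contractible subcomplex preserves homotopy type, and tracking connectivity so that the inductive hypothesis applies at each stage --- this is precisely where the perfect elimination ordering of chordal graphs is indispensable.
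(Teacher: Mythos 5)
The paper does not prove this statement at all---it is quoted as Fr\"oberg's classical theorem with a citation to \cite{F}---so there is no in-paper argument to compare against; I can only assess your proof on its own terms, and it is essentially the standard (and correct) proof of Fr\"oberg's theorem. The reduction via Hochster's formula is set up properly: $I(G)$ is the Stanley--Reisner ideal of $\mathrm{Ind}(G)=\mathrm{Cl}(G^c)$, restriction commutes with induced subgraphs, and since $I(G)$ is generated in degree $2$ one always has $\reg(R/I(G))\geq 1$, so $\reg(I(G))=2$ is indeed equivalent to the vanishing of $\tilde H_\ell$ of all induced clique complexes for $\ell\geq 1$. The contrapositive direction is clean: an induced $k$-cycle ($k\geq 4$) in $G^c$ is triangle-free and chordless, so its clique complex is the $1$-sphere. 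For the converse, the simplicial-vertex argument works: Dirac's lemma supplies a simplicial vertex $v$, the closed star and link of $v$ are full simplices, the graph $H_W\setminus v$ remains chordal, and---a point worth writing out explicitly---it remains \emph{connected}, since any path $a$--$v$--$b$ can be rerouted through the edge $\{a,b\}$ of the clique $N(v)$; gluing a contractible closed star along a contractible link then preserves homotopy type, giving contractibility of each connected component by induction. The only items you flag as obstacles (the gluing lemma and the connectivity bookkeeping) are both standard and go through, so I see no genuine gap; if anything, you prove slightly more than needed (contractibility rather than mere acyclicity over $K$, which is what Hochster's formula actually requires and which could alternatively be extracted from a Mayer--Vietoris induction without any homotopy-theoretic input).
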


This is the classical result by Fr\"oberg in \cite{F}. 
The following result is generalization of the above by Herzog, Hibi and Zheng in \cite{HHZ}. 

\begin{Theorem}\cite{HHZ}
	Let $G$ be a graph. The followings are equivalent: 
	\begin{itemize}
		\item $I(G)$ has a linear resolution. In other words, $\reg(I(G)) = 2$. 
		\item The complement of $G$ is a chordal graph.  
		\item Each power of $I$ has a linear resolution.
		In other words, for any $k \geq 1$, $\reg\left(I(G)^k\right) = 2k$. 
		\item $I(G)$ has a linear quotient. 
	\end{itemize}
\end{Theorem}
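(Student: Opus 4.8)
The plan is to lean on Fr\"oberg's theorem (the immediately preceding theorem in the excerpt), which already supplies the equivalence of the first two bullets: $\reg(I(G)) = 2$ if and only if $G^c$ is chordal. What remains is to fold in the two purely algebraic conditions, linear quotients and linearity of all powers, and two of the required implications come essentially for free. First, if $I(G)$ has linear quotients then, since it is generated in a single degree, it has a linear resolution; this is the standard fact that for an equigenerated monomial ideal the linear-quotient property forces a linear resolution. Second, if every power $I(G)^k$ has a linear resolution, then taking $k=1$ already gives that $I(G)$ has a linear resolution. So the genuine content is to produce, \emph{from co-chordality}, both a linear-quotient ordering of $I(G)$ and a linear resolution of each $I(G)^k$.

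For the ideal $I(G)$ itself I would exploit a perfect elimination ordering $x_{i_1}, \ldots, x_{i_n}$ of the chordal graph $G^c$. I would order the quadratic generators $x_a x_b$ (for $\{x_a, x_b\} \in E(G)$) in a way compatible with this elimination order and then verify directly that each colon ideal $(u_1, \ldots, u_{t-1}) : u_t$ is generated by variables. The clique property of a perfect elimination ordering is exactly the input that forces these colon ideals to be linear: when two generators create an obstruction traceable to an earlier-eliminated vertex, the co-chordal structure supplies the non-edge of $G$ (equivalently, the edge of $G^c$) that realizes the obstruction by a single variable. This gives the implication ``$G^c$ chordal $\Rightarrow$ linear quotients,'' hence the last bullet.

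The crux is the powers. Here I would show that each $I(G)^k$ again has linear quotients, which by the same equigenerated principle yields a linear resolution of $I(G)^k$ and closes the chain of equivalences. The mechanism I would use is the sorting operator and the notion of a sortable set: co-chordality of $G^c$ makes the quadratic generating set behave like a sortable family, and for an ideal generated by a sortable set the generators of every power, listed in the sorted order, can be shown to satisfy the linear-quotient condition. Concretely, I would equip the degree-$2k$ generators of $I(G)^k$ with the sorted monomial order and check that the colon of each generator against its predecessors is generated by variables, the verification once more reducing to the clique and elimination structure inherited from $G^c$.

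I expect this last step to be the main obstacle. Linearity is \emph{not} inherited by powers in general: there exist monomial ideals (necessarily of higher generating degree) with a linear resolution whose squares already fail to be linear, so the argument cannot rest on the abstract linear-quotient property of $I(G)$ alone and must genuinely use the degree-two, co-chordal combinatorics. Pinning down the correct order on the generators of $I(G)^k$ and proving that the colon ideals remain linear is where the real work lies; every other implication is either Fr\"oberg's theorem or a formal consequence of the definitions.
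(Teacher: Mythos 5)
This theorem is quoted in the paper as a known result of Herzog, Hibi and Zheng \cite{HHZ}; the paper supplies no proof of its own, so your attempt can only be measured against the argument in the cited source. Your architecture for the easy implications is fine: Fr\"oberg gives the equivalence of the first two bullets, linear quotients for an equigenerated ideal give a linear resolution, and the case $k=1$ of the third bullet trivially returns the first. You have also correctly identified where all the content lives, namely in ``$G^c$ chordal $\Rightarrow$ every power of $I(G)$ has a linear resolution,'' and you are right that this cannot follow formally from linearity of $I(G)$ alone, since linearity of resolutions is not preserved under taking powers in general.

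The gap is exactly at the step you flag as the main obstacle, and the mechanism you propose to close it does not work as stated. Sortability of a set of squarefree quadratic monomials is a labeling-dependent condition that encodes a proper-interval-type structure on the graph, not co-chordality; a perfect elimination ordering of $G^c$ does not in general make the edge set of $G$ sortable, so ``co-chordality makes the quadratic generating set behave like a sortable family'' is an unproved (and, as a general claim, doubtful) assertion on which the whole implication rests. The argument in \cite{HHZ} takes a different route: one shows that the presentation ideal of the Rees algebra of $I(G)$ admits a Gr\"obner basis whose elements are at most linear in the fiber variables (the so-called $x$-condition), and that this condition forces every power $I(G)^k$ to have linear quotients; the co-chordal combinatorics enter through a perfect elimination ordering of $G^c$ supplied by Dirac's theorem. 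Until you either verify the $x$-condition or give a direct, order-specific verification that the colon ideals for the generators of $I(G)^k$ are generated by variables, the chain of equivalences is not closed.
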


If $G^c$ is a chordal graph, $G^c$ has no induced cycle of length $4$ or more, thus $\im(G) = 1$. 
In general, it is known that the regularity of edge ideal of $G$ is bounded below by the induced matching number of $G$, and above by the matching number. 

\begin{Theorem}\cite{HaVanTuyl}, \cite{K}
	For any graph $G$, the following holds.  
	\[
	\im(G) + 1 \leq \reg(I(G)) \leq \m(G) + 1.
	\] 
\end{Theorem}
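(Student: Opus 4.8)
The plan is to prove the two inequalities separately. For the lower bound $\im(G)+1 \le \reg(I(G))$ I would exhibit a structured induced subgraph and invoke monotonicity of regularity under passage to induced subgraphs; for the upper bound $\reg(I(G)) \le \m(G)+1$ I would induct on the number of vertices using a short exact sequence obtained by colon with a single variable.

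For the lower bound, set $s = \im(G)$ and fix an induced matching $\{e_1,\dots,e_s\}$ of maximum cardinality, letting $W$ be the set of $2s$ vertices it covers. Because the matching is \emph{induced}, the induced subgraph $G_W$ is a disjoint union of $s$ edges, so $I(G_W)$ is generated by $s$ pairwise coprime quadratic monomials, i.e.\ a regular sequence of $s$ forms of degree $2$. Its minimal free resolution over $K[W]$ is the Koszul complex, whose term in homological degree $i$ is a direct sum of copies of $K[W](-2i)$; the last nonzero term sits in homological degree $s$ with generators in degree $2s$, so $\reg\bigl(K[W]/I(G_W)\bigr) = s$ and hence $\reg(I(G_W)) = s+1$. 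It remains to observe that regularity does not increase under passage to an induced subgraph: by Hochster's formula the fine graded Betti numbers of $K[W]/I(G_W)$ indexed by subsets $W' \subseteq W$ coincide with those of $R/I(G)$, whence $\reg\bigl(K[W]/I(G_W)\bigr) \le \reg(R/I(G))$. Combining the two facts gives $\im(G)+1 \le \reg(I(G))$.

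For the upper bound I would argue by induction on $|V(G)|$, the base case of a single edge being immediate. Choose any vertex $x$; since $G$ has no isolated vertex, $x$ has a neighbour. A direct computation gives $(I(G):x) = (N_G(x)) + I(G \setminus N_G[x])$ while $I(G)+(x)$ describes the vertex deletion $G \setminus x$, yielding the short exact sequence
\[
0 \to \bigl(R/(I(G):x)\bigr)(-1) \xrightarrow{\,\cdot x\,} R/I(G) \to R/\bigl(I(G)+(x)\bigr) \to 0 .
\]
Since modding out the free variables $N_G(x)$ leaves regularity unchanged, the standard regularity estimate for a short exact sequence gives $\reg(R/I(G)) \le \max\{\reg(R/I(G\setminus N_G[x]))+1,\ \reg(R/I(G\setminus x))\}$. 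Applying the inductive hypothesis to the two smaller graphs, it then suffices to verify the combinatorial inequalities $\m(G\setminus x) \le \m(G)$ and $\m(G\setminus N_G[x])+1 \le \m(G)$; the first is clear since $G\setminus x$ is a subgraph, and the second holds because any matching of $G\setminus N_G[x]$ avoids $N_G[x]$ and so extends by an edge incident to $x$. Therefore $\reg(R/I(G)) \le \m(G)$, i.e.\ $\reg(I(G)) \le \m(G)+1$.

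The genuinely delicate part is the upper-bound induction: one must correctly identify the two cokernels in the short exact sequence with $G\setminus N_G[x]$ and $G\setminus x$, and must account for the variables of $N_G(x)$ and for any isolated vertices created by the deletions (these affect neither regularity nor matching number, but must be tracked). By contrast the lower bound is essentially formal once the Koszul computation and the Hochster-type monotonicity are in hand.
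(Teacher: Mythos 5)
The paper states this result only as a citation to \cite{HaVanTuyl} and \cite{K} and gives no proof of its own, so there is nothing internal to compare against; your reconstruction is correct and is essentially the standard argument from those sources. Both halves check out: for the lower bound, the restriction $G_W$ to the vertices of a maximum induced matching is a disjoint union of $s=\im(G)$ edges, the Koszul complex gives $\reg(I(G_W))=s+1$, and Hochster's formula yields the needed monotonicity $\reg(I(G_W))\le\reg(I(G))$ because the relevant fine-graded Betti numbers are restrictions of those of $R/I(G)$; for the upper bound, the identifications $(I(G):x)=(N_G(x))+I(G\setminus N_G[x])$ and $I(G)+(x)\leftrightarrow G\setminus x$, the short-exact-sequence bound $\reg(R/I)\le\max\{\reg(R/(I:x))+1,\ \reg(R/(I+(x)))\}$, and the combinatorial facts $\m(G\setminus N_G[x])+1\le\m(G)$ (valid since $x$ is not isolated) and $\m(G\setminus x)\le\m(G)$ close the induction. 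Your closing caveats about isolated vertices created by deletion are the right ones to flag, and they are handled exactly as you say.
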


Moreover, the regularity of powers of edge ideals is bounded below by the $\im(G)$. 

\begin{Theorem}\cite{BHT}
	For any graph $G$ and for all $k \geq 1$, the following holds.
	\[2k + \im(G) - 1 \leq \reg(I(G)^k). \]
\end{Theorem}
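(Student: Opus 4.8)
The plan is to reduce this general lower bound to a single explicit computation for the disjoint union of $\im(G)$ edges, using the fact that passing to an induced subgraph never raises the regularity of powers. Set $c = \im(G)$ and fix an induced matching $M = \{e_1, \ldots, e_c\}$ attaining this number. Let $W \subseteq V(G)$ be the $2c$ vertices covered by $M$ and let $H = G_W$. Because $M$ is an \emph{induced} matching, $W$ carries no edges of $G$ other than $e_1, \ldots, e_c$, so $H$ is precisely a disjoint union of $c$ edges and $I(H) = (x_1 y_1, \ldots, x_c y_c)$ is a complete intersection of $c$ quadrics in the $2c$ variables supported on $W$.

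The first step I would carry out is the monotonicity statement: for every $k \ge 1$, $\reg(I(H)^k) \le \reg(I(G)^k)$. I would derive this from the general principle that restricting a monomial ideal to the subring generated by a subset of the variables does not increase its regularity. Deleting a vertex $v$ amounts to setting $x_v = 0$, and for any monomial ideal $I$ there is a short exact sequence $0 \to (R/(I : x_v))(-1) \xrightarrow{\ \cdot x_v\ } R/I \to R/(I + (x_v)) \to 0$, whose regularity rules give $\reg\bigl(R/(I + (x_v))\bigr) \le \max\{\reg(R/I),\, \reg(R/(I : x_v))\}$. Applying this to $I = I(G)^k$ and deleting, one at a time, all vertices outside $W$ — and noting that setting the outside variables to zero carries $I(G)^k$ exactly to $I(H)^k$, since images commute with products — yields the inequality by induction on the number of deleted vertices.

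The second step is to compute $\reg(I(H)^k)$ and show it equals $2k + c - 1$. Writing $a_i = x_i y_i$, the $a_i$ form a regular sequence of quadrics, so $R_W$ is free, hence flat, over $P = K[a_1, \ldots, a_c]$, and $I(H)^k = \mathfrak{m}_P^k R_W$ with $\mathfrak{m}_P = (a_1, \ldots, a_c)$. Tensoring the minimal free resolution of $\mathfrak{m}_P^k$ over $P$ up to $R_W$ preserves minimality, so it suffices to read off the $R_W$-graded Betti numbers of $\mathfrak{m}_P^k$. The Eliahou--Kervaire resolution of a power of the maximal ideal is linear, but here each structural syzygy multiplies by a single $a_i$ of degree $2$ rather than $1$; thus $\beta_{i,\,2k+2i} \ne 0$ for $0 \le i \le c-1$, giving $\reg = \max_i (2k + 2i - i) = 2k + (c-1)$. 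Combining the two steps, $2k + \im(G) - 1 = \reg(I(H)^k) \le \reg(I(G)^k)$, as desired.

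The main obstacle is the monotonicity in the first step. The short exact sequence only bounds $\reg\bigl(R/(I + (x_v))\bigr)$ by the maximum of $\reg(R/I)$ and $\reg(R/(I : x_v))$, so the real work is to control the colon term and verify $\reg(R/(I(G)^k : x_v)) \le \reg(R/I(G)^k)$, or otherwise to organize the induction so that the colon contributions never dominate. Securing this colon bound — rather than the closing complete-intersection computation, which is essentially a regrading of a known linear resolution — is where the difficulty lies.
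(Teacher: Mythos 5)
First, note that the paper gives no proof of this statement at all: it is quoted as a known result from \cite{BHT}, so there is nothing internal to compare against. Your overall strategy --- restrict to the induced subgraph $H=G_W$ spanned by a maximum induced matching, then compute $\reg(I(H)^k)$ for a disjoint union of $c$ edges --- is exactly the strategy of the original proof, and your second step is correct as written: $R_W=K[W]$ is free over $P=K[a_1,\dots,a_c]$, the regraded resolution of $\mathfrak{m}_P^k$ remains minimal after tensoring up to $R_W$, and $\beta_{c-1,\,2k+2(c-1)}(I(H)^k)\neq 0$ forces $\reg(I(H)^k)=2k+c-1$.

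The genuine gap is the one you flag yourself, and it does not close as organized. The short exact sequence $0 \to (R/(I:x_v))(-1) \to R/I \to R/(I+(x_v)) \to 0$ only gives $\reg(R/(I+(x_v))) \le \max\{\reg(R/I),\,\reg(R/(I:x_v))\}$, and there is no available bound of the form $\reg(I(G)^k : x_v) \le \reg(I(G)^k)$: the colon of a power of an edge ideal by a variable is not itself a power of an edge ideal, and controlling its regularity is at least as hard as the problem you started with, so the induction on deleted vertices stalls at the first step. The standard repair --- and what is actually used in \cite{BHT} --- is the restriction lemma for multigraded Betti numbers rather than any exact sequence. For a monomial ideal $I$ and a multidegree $\mathbf{a}$ supported on $W$, the upper Koszul simplicial complex $K^{\mathbf{a}}(I)=\{\sigma : x^{\mathbf{a}}/x^{\sigma}\in I\}$ computing $\beta_{i,\mathbf{a}}(I)$ depends only on the monomials of $I$ supported on $W$; since $H=G_W$ is an \emph{induced} subgraph, $I(G)^k\cap K[W]=I(H)^k$, whence $\beta_{i,\mathbf{a}}(I(H)^k)=\beta_{i,\mathbf{a}}(I(G)^k)$ for every such $\mathbf{a}$ and therefore $\beta_{i,j}(I(H)^k)\le\beta_{i,j}(I(G)^k)$ for all $i,j$. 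This yields $\reg(I(H)^k)\le\reg(I(G)^k)$ with no colon ideals at all, and together with your (correct) complete-intersection computation it finishes the proof.
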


The above implies that if there exists the integer $k \geq 1$ such that $I(G)^k$ has a linear resolution, then $\im(G) = 1$. Conversely, if $\im(G) = 1$, is there the integer $k \geq 1$ such that $I(G)^k$ has a linear resolution? It is an open problem. 
Conjecture \ref{NPconj} is the special and important case. 
We know several results for Conjecture \ref{NPconj}, in particular, the following is well-known.

\begin{Theorem}\cite{B1}
	For any gap-free and cricket-free graph $G$ and for all $k \geq 2$, 
	$\reg(I(G)) \leq 3$ and 
	$\reg(I(G)^k) = 2k$, as a consequence, $I(G)^k$ has a linear minimal free resolution. 
\end{Theorem}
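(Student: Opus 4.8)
The plan is to prove both assertions by induction on the exponent, using the colon-ideal inequality for regularity of powers as the engine. Write $I = I(G)$, and recall that every minimal monomial generator of $I^s$ is a product $m = e_1 \cdots e_s$ of edges of $G$, a monomial of degree $2s$. The starting point is the inequality
\[
\reg(I^{s+1}) \leq \max\bigl\{ \reg\bigl(I^{s+1} : m\bigr) + 2s,\ \reg(I^s) \bigr\},
\]
where $m$ ranges over the minimal generators of $I^s$. I would take this inequality as known; it holds for any monomial ideal generated in a single degree and is obtained from an iterated short-exact-sequence comparison of $I^{s+1}$ with $I^s$, the shift $2s$ being the degree of $m$. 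This reduces the whole problem to two inputs: a base-case bound on $\reg(I)$, and control of the colon ideals $\bigl(I^{s+1} : m\bigr)$.

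For the base case I need $\reg(I(G)) \leq 3$. Since every cricket-free graph is claw-free, $G$ is simultaneously gap-free and claw-free, and for such graphs one has $\reg(I(G)) \leq 3$; I would invoke this bound directly. Granting the colon estimate $\reg\bigl(I^{s+1} : m\bigr) = 2$ established below, the inequality gives at $s = 1$ that $\reg(I^2) \leq \max\{4, \reg(I)\} = 4$, and at the inductive step $\reg(I^{k+1}) \leq \max\{2 + 2k,\ 2k\} = 2(k+1)$ whenever $\reg(I^k) = 2k$. Matching these upper bounds against the lower bound $2k \leq \reg(I^k)$, which follows from $\im(G) = 1$ via \cite{BHT}, forces $\reg(I^k) = 2k$ for all $k \geq 2$. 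Since $I^k$ is generated in the single degree $2k$, the equality $\reg(I^k) = 2k$ is precisely the statement that $I^k$ has a linear resolution, closing the induction.

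The heart of the argument, and the step I expect to be the main obstacle, is the colon estimate
\[
\reg\bigl(I^{s+1} : m\bigr) = 2 \quad \text{for every product } m = e_1 \cdots e_s \text{ of edges of } G.
\]
Here I would use the even-connection description of the colon ideal: $\bigl(I^{s+1} : m\bigr)$ is minimally generated in degree two, its generators being the edges of $G$ together with the monomials $uv$ (and possibly squares $u^2$) for which $u$ and $v$ are even-connected with respect to $m$ by an odd walk whose interior steps use the edges $e_1, \dots, e_s$. Polarizing the square generators to new variables turns this into the edge ideal of an associated even-connection graph $G'$, with regularity unchanged, so by the Fr\"oberg/\cite{HHZ} criterion it suffices to prove that $(G')^c$ is chordal, equivalently $\reg(I(G')) = 2$.

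Thus the crux is the purely combinatorial claim that $G'$ is co-chordal, and this is exactly where the gap-free and cricket-free hypotheses enter. I would argue by contradiction, taking a shortest induced cycle of length at least four in $(G')^c$. A length-four such cycle is an induced $2K_2$ in $G'$, i.e. a gap; translating the even-connecting walks that witness its two edges back into $G$ and combining them with the $e_i$ produces a gap in $G$, contradicting gap-freeness — so I would first prove that $G'$ inherits gap-freeness from $G$. The remaining obstructions, induced cycles of length at least five in $(G')^c$, must then be excluded using cricket-freeness: tracing such a configuration back through the even-connecting walks yields a small subgraph of $G$ built from an edge of $m$ and several vertices attached to its endpoints, which is forced to contain an induced cricket (or else a gap). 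Carrying out this case analysis — bookkeeping precisely how even-connection through $e_1, \dots, e_s$ creates and destroys adjacencies, and showing every complement-cycle obstruction descends to a forbidden gap or cricket in $G$ — is the technical core and the principal difficulty of the proof.
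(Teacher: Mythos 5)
This statement is quoted from \cite{B1}; the paper you are reading gives no proof of it, so the only meaningful comparison is with Banerjee's original argument. Your outline reproduces that argument's architecture faithfully: the inequality $\reg(I^{s+1}) \leq \max\{\reg(I^{s+1}:m)+2s,\ \reg(I^s)\}$ over minimal generators $m$ of $I^s$, the base case $\reg(I(G))\leq 3$ for gap-free claw-free graphs (due to Nevo), the even-connection description of $(I^{s+1}:e_1\cdots e_s)$ as a quadratically generated monomial ideal whose polarization is an edge ideal, and the reduction via Fr\"oberg/\cite{HHZ} to showing that this even-connection graph is co-chordal. All of that is correct and is exactly the route taken in \cite{B1}.

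The gap is that you stop at the step that carries essentially all of the content: you assert, but do not prove, that every induced cycle of length at least $4$ in the complement of the even-connection graph $G'$ descends to a gap or a cricket in $G$. This cannot be reduced to merely showing that $G'$ inherits gap-freeness and cricket-freeness from $G$, because those two properties do not imply co-chordality --- the $5$-cycle $C_5$ is gap-free, claw-free and cricket-free, yet $\reg(I(C_5))=3$, so its complement is not chordal. The argument in \cite{B1} must therefore exploit the specific extra structure of even-connections (in particular the square generators $u^2$ coming from vertices even-connected to themselves, and the fact that endpoints of the $e_i$ acquire large neighborhoods in $G'$), and the resulting case analysis of anticycle obstructions is the actual proof of the theorem, not a routine verification. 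As written, your proposal is an accurate roadmap of Banerjee's proof rather than a proof: the two combinatorial lemmas it defers (gap-freeness of $G'$, and the exclusion of longer complement-cycles) are precisely where the hypotheses gap-free and cricket-free do their work.
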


Finally, we introduce the $S$-suspension and prepare several lemmata to prove Theorem \ref{main2}.

\begin{Definition}\upshape\cite{HNM}
	Let $G=(V(G), E(G))$ be a graph and $S \subset V(G)$ be an independent set of $G$. 
	The $S$-{\em suspension} of $G$, denoted by $G^S$, is defined as follows:
	\begin{enumerate}
		\item $V(G^{S}) = V(G) \cup \{ x_{n + 1} \}$, where $x_{n + 1}$ is a new vertex. 
		\item $E(G^{S}) = E(G) \cup \left\{ \{ x_{i}, x_{n + 1} \} : x_{i} \not\in S \right\}$. 
	\end{enumerate}
\end{Definition}

\begin{Lemma}
	\label{S-cc}\cite{HNM}
	Let $G=(V(G) = \{x_{1}, \ldots, x_{n}\}, E(G))$ be a graph and $S\subset V(G)$ be an independent set of $G$. 
	Then one has 
	\begin{enumerate}
		\item[$(1)$]  $\im(G^{S}) = \im(G)$. 
		\item[$(2)$]  $\reg(I(G^{S})) = \reg(I(G))$. 
	\end{enumerate}
\end{Lemma}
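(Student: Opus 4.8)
The plan is to treat the two statements separately: I handle the combinatorial identity $(1)$ directly, and reduce the homological identity $(2)$ to a single short exact sequence obtained by singling out the apex vertex $x_{n+1}$. Throughout I would write $T = V(G) \setminus S$ for the set of neighbours of $x_{n+1}$ in $G^{S}$. Since $S$ is independent and $G$ has no isolated vertex, $T \neq \emptyset$ and, crucially, every edge of $G$ meets $T$ (no edge can have both endpoints in $S$).

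For $(1)$, the inequality $\im(G^{S}) \ge \im(G)$ is immediate, since $G = (G^{S})_{V(G)}$ is an induced subgraph and so every induced matching of $G$ is also one of $G^{S}$. For the reverse I would take an induced matching $M$ of $G^{S}$ and split into two cases. If no edge of $M$ is incident to $x_{n+1}$, then $M$ is an induced matching of $G$, whence $|M| \le \im(G)$. If some $e_{1} = \{x_i, x_{n+1}\} \in M$ (so $x_i \in T$), I claim $M = \{e_1\}$: any second edge $\{x_a, x_b\} \in M$ would, by the induced-matching condition, have to be non-adjacent to $x_{n+1}$, forcing $x_a, x_b \in S$, which is impossible because $S$ is independent. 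Hence $|M| = 1 \le \im(G)$, and combining the cases gives $\im(G^{S}) = \im(G)$.

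For $(2)$ the key computation is the colon ideal $(I(G^{S}) : x_{n+1})$. Its monomial generators arise from those of $I(G^{S})$ by cancelling $x_{n+1}$: the edges of $G$ contribute themselves, while each edge $\{x_i, x_{n+1}\}$ contributes the variable $x_i$ with $x_i \in T$, so $(I(G^{S}) : x_{n+1}) = I(G) + (x_i : x_i \in T)$. Here the independence of $S$ enters decisively: every generator of $I(G)$ is divisible by some variable in $T$, so $I(G) \subseteq (x_i : x_i \in T)$ and the colon ideal collapses to the prime $(x_i : x_i \in T)$, generated by linear forms. On the other hand, reducing modulo $x_{n+1}$ kills exactly the edges through the apex, giving $R'/(I(G^{S}) + (x_{n+1})) \cong R/I(G)$, where $R' = K[x_1, \ldots, x_{n+1}]$.

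These identifications feed the standard short exact sequence
\[ 0 \to \bigl(R'/(x_i : x_i \in T)\bigr)(-1) \xrightarrow{\ \cdot x_{n+1}\ } R'/I(G^{S}) \to R/I(G) \to 0. \]
The left-hand module is a shifted polynomial ring, so its regularity is $1$, while $\reg(R/I(G)) = \reg(I(G)) - 1 \ge 1$ because $I(G)$ is generated in degree $2$. Applying the comparison inequalities for regularity along a short exact sequence then pins $\reg(R'/I(G^{S}))$ to $\reg(R/I(G))$ from above, via $\reg(B) \le \max\{\reg(A), \reg(C)\}$, and from below, via $\reg(C) \le \max\{\reg(B), \reg(A)-1\}$ together with $\reg(A)-1 = 0 < \reg(C)$. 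This yields $\reg(R'/I(G^{S})) = \reg(I(G)) - 1$, hence $\reg(I(G^{S})) = \reg(I(G))$. The one genuine subtlety — and the step I would watch most carefully — is the colon computation and its collapse: the equality $(I(G^{S}) : x_{n+1}) = (x_i : x_i \in T)$ is exactly where the hypothesis that $S$ be independent is used, and it is what forces the outer terms of the sequence to have the controlled regularities $1$ and $\reg(I(G)) - 1$ that drive the conclusion.
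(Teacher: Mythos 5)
The paper does not prove this lemma at all --- it is quoted from \cite{HNM} as a known result --- so there is no in-paper argument to compare against. Your proof is correct and self-contained. Part $(1)$ is handled exactly as one would hope: the case analysis on whether the apex $x_{n+1}$ lies in the induced matching is the right dichotomy, and the observation that a second matching edge would be forced into $S$ (contradicting independence) is the whole point. For part $(2)$, your short exact sequence argument is sound: the collapse $(I(G^{S}) : x_{n+1}) = (x_i : x_i \in T)$ is correctly justified by the fact that every edge of $G$ meets $T$, and the two regularity inequalities along the sequence, combined with $\reg(A) = 1 \le \reg(C)$ and $\reg(A) - 1 = 0 < \reg(C)$, do pin down $\reg(R'/I(G^{S})) = \reg(I(G)) - 1$. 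Note that your sequence is essentially an unrolled form of the paper's Lemma \ref{exact}: applying that lemma directly with $m = x_{n+1}$ gives $\reg(I(G^{S})) \le \max\{\reg((x_i : x_i \in T)) + 1,\ \reg(I(G) + (x_{n+1}))\} = \max\{2, \reg(I(G))\} = \reg(I(G))$, with the reverse inequality following from the fact that $G$ is an induced subgraph of $G^{S}$; this is the shorter route the literature typically takes, but your version proves the equality in one pass without invoking monotonicity of regularity under induced subgraphs.
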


\begin{Definition}\upshape\cite{FHV}
	Let $I, J,$ and $K$ be monomial ideals such that $\G(I)$ is the disjoint union of $\G(J)$ and $\G(K)$ where $\G(I)$ is the set of the minimal monomial generators of $I$.
	$I = J+K$ is a {\em Betti splitting} if 
	\[\beta_{i,j}(I) = \beta_{i,j}(J)+\beta_{i,j}(K)+\beta_{i-1,j}(J \cap K) 
	\text{for all $i \in \NN$ and (multi)degrees $j$.}\]
	If $I = J+K$ is a Betti splitting then the followings are hold. 
	\begin{enumerate}
		\item $\reg(I) = \max\{\reg(J), \reg(K), \reg(J\cap K) -1\}$. 
		\item $\pd(I) = \max\{\pd(J), \pd(K), \pd(J\cap K) + 1\}$. 
	\end{enumerate}
\end{Definition}

\begin{Lemma}\cite{FHV}\label{doublelinear}
	Let $I$ be a monomial ideal in $R$, and suppose that $J$ and $K$ are monomial ideals in 
	$R$ such that $\G(I)$ is the disjoint union of $\G(J)$ and $\G(K)$.
	If both $J$ and $K$ have linear resolutions, then $I = J + K$ is a Betti splitting.
\end{Lemma}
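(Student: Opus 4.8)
The plan is to compare the graded Betti numbers of $I$, $J$, $K$, and $J\cap K$ through the Mayer--Vietoris short exact sequence
\[
0 \longrightarrow J\cap K \xrightarrow{\ \phi\ } J\oplus K \xrightarrow{\ \psi\ } I \longrightarrow 0, \qquad \phi(f)=(f,-f),\ \psi(g,h)=g+h.
\]
Tensoring with the residue field $R/\mm$ and passing to the associated long exact sequence of $\Tor$-modules, I would read off dimensions in each homological degree $i$ and internal degree $j$. Exactness then forces
\[
\beta_{i,j}(I)=\beta_{i,j}(J)+\beta_{i,j}(K)+\beta_{i-1,j}(J\cap K)-\dim \Im(\phi_*)_{i,j}-\dim \Im(\phi_*)_{i-1,j},
\]
where $\phi_*\colon \Tor_i(J\cap K,R/\mm)\to \Tor_i(J,R/\mm)\oplus \Tor_i(K,R/\mm)$ is the induced comparison map. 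Consequently $I=J+K$ is a Betti splitting, in the sense of the definition preceding this lemma, precisely when $\phi_*$ is the zero map in every homological and internal degree. Since $\phi_*$ decomposes as $(\iota_{J*},-\iota_{K*})$, where $\iota_J\colon J\cap K\hookrightarrow J$ and $\iota_K\colon J\cap K\hookrightarrow K$ are the inclusions, it is enough to show that both $\iota_{J*}$ and $\iota_{K*}$ vanish.

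Next I would bring in the linear-resolution hypothesis through a degree count. Say $J$ is generated in degree $d_J$ and $K$ in degree $d_K$; having a linear resolution means $\beta_{i,j}(J)=0$ unless $j=i+d_J$, and likewise for $K$. The comparison map $\iota_{J*}$ preserves internal degree and lands in $\Tor_i(J,R/\mm)_j$, which is zero unless $j=i+d_J$; so the only degree I must examine is $j=i+d_J$, where the source is $\Tor_i(J\cap K,R/\mm)_{i+d_J}$. The crux is therefore a lower bound on the degrees in which $\Tor_\bullet(J\cap K,R/\mm)$ can be nonzero, which reduces to bounding the degrees of the minimal generators of $J\cap K$.

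Here I would use that $J\cap K=\bigl(\lcm(u,v): u\in\G(J),\,v\in\G(K)\bigr)$, so its minimal generators are among these least common multiples. The key observation is that the hypothesis ``$\G(I)$ is the disjoint union of $\G(J)$ and $\G(K)$'' means that $\G(J)\cup\G(K)$ is already the \emph{minimal} generating set of $I$: if some $v\in\G(K)$ divided some $u\in\G(J)$ (with $u\neq v$ by disjointness), then $u$ would be redundant in $I=J+K$, contradicting $u\in\G(I)$. Thus no generator of one ideal divides a generator of the other, so each $\lcm(u,v)$ is a proper multiple of both $u$ and $v$, giving $\deg\lcm(u,v)>\max(d_J,d_K)$. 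Hence $J\cap K$ is generated in degrees $\geq d_J+1$ and $\geq d_K+1$, and the standard fact that a module generated in degrees $\geq d$ has $\beta_{i,j}=0$ for $j<i+d$ yields $\Tor_i(J\cap K,R/\mm)_{i+d_J}=0$. Therefore $\iota_{J*}=0$, and the symmetric argument gives $\iota_{K*}=0$, so $\phi_*=0$ and the splitting follows.

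The main obstacle is exactly the degree sub-lemma in the third paragraph: without the disjoint-union hypothesis one could have cross-divisibilities among the generators of $J$ and $K$, producing minimal generators of $J\cap K$ in degree $d_J$ (or $d_K$) and breaking the vanishing. The essential point to isolate and verify carefully is that the \emph{minimality} of the combined generating set $\G(J)\cup\G(K)$ forces the strict degree increase in every $\lcm(u,v)$; once this is in hand the remaining steps are formal. (The multigraded refinement is automatic, since $\phi_*$ preserves multidegree and the vanishing is already pinned down by the total degree via the linear resolutions of $J$ and $K$.)
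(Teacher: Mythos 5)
Your proof is correct, and it is essentially the argument of Francisco--H\`a--Van Tuyl: the paper itself gives no proof of this lemma (it is quoted from \cite{FHV}), and your route --- the Mayer--Vietoris sequence reducing the Betti-splitting condition to the vanishing of $\Tor_i(J\cap K,R/\mm)\to\Tor_i(J,R/\mm)\oplus\Tor_i(K,R/\mm)$, followed by the degree count showing that minimality of $\G(J)\sqcup\G(K)$ forces every $\lcm(u,v)$ to live in degree strictly above $d_J$ and $d_K$ --- is exactly how the cited source proves it. The one point worth stating explicitly is that a linear resolution forces $J$ and $K$ to be generated in single degrees $d_J$ and $d_K$, which you implicitly use and which follows from the definition of linear resolution adopted in this paper.
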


\begin{Lemma}\label{exact}
	Let $I \subseteq R$ be a monomial ideal, and let $m$ be a monomial of degree $d$.
	Then
	\[
	\reg(I) \leq \max\{ \reg (I : m) + d, \reg (I,m)\}. 
	\]
	Moreover, if $m$ is a variable $x$ appearing in $I$, then $\reg(I)$ is {\it equal} to one of
	these terms.
\end{Lemma}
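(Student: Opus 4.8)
The plan is to build everything on the canonical short exact sequence attached to the monomial $m$. Writing $(I,m)$ for $I+(m)$, multiplication by $m$ gives the exact sequence of graded modules
\[0 \longrightarrow \bigl(R/(I:m)\bigr)(-d) \xrightarrow{\ \cdot m\ } R/I \longrightarrow R/(I,m) \longrightarrow 0,\]
which is exact because the kernel of $\cdot m$ consists of the classes $\overline f$ with $mf\in I$, i.e.\ with $f\in(I:m)$, hence is $0$, while the cokernel is visibly $R/(I,m)$. First I would record the two bookkeeping facts $\reg(M(-d))=\reg(M)+d$ and $\reg(J)=\reg(R/J)+1$ for a proper monomial ideal $J$ (the latter from the sequence $0\to J\to R\to R/J\to 0$ together with $\reg(R)=0$).

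Next, I would apply the standard regularity estimate for a short exact sequence $0\to A\to B\to C\to 0$, namely $\reg(B)\le\max\{\reg(A),\reg(C)\}$, to the sequence above with $A=(R/(I:m))(-d)$, $B=R/I$, and $C=R/(I,m)$. This yields
\[\reg(R/I)\le\max\{\reg(R/(I:m))+d,\ \reg(R/(I,m))\},\]
and translating each quotient regularity back to ideal regularity via the bookkeeping facts gives exactly $\reg(I)\le\max\{\reg(I:m)+d,\ \reg(I,m)\}$, the asserted inequality.

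For the ``moreover'' part I would specialize to $m=x$ a variable ($d=1$) and pass to the long exact sequence in local cohomology
\[\cdots\to\Coh{i}{(R/(I:x))(-1)}\to\Coh{i}{R/I}\to\Coh{i}{R/(I,x)}\xrightarrow{\ \delta\ }\Coh{i+1}{(R/(I:x))(-1)}\to\cdots,\]
using the characterization $\reg(M)=\max_i\{\,i+a_i(M)\,\}$ with $a_i(M)=\max\{t:\Coh{i}{M}_t\neq 0\}$. If the connecting maps $\delta$ vanish, the sequence splits into short exact sequences in each cohomological degree, forcing $a_i(R/I)=\max\{a_i((R/(I:x))(-1)),\,a_i(R/(I,x))\}$ and hence making the displayed inequality an equality realized by one of the two values $\reg(I:x)+1$ or $\reg(I,x)$. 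The main obstacle is precisely the vanishing (or at least the non-cancellation) of the extremal graded piece through $\delta$: here I would exploit that $x$ annihilates $R/(I,x)$, so $x$ acts as $0$ on $\Coh{i}{R/(I,x)}$, together with the $R$-linearity of $\delta$, to rule out destructive interference in the top degree computing $\reg(R/I)$. Equivalently, one can run the same argument with $\Tor_\bullet(-,K)$ in place of local cohomology and show that the maps induced by $\cdot x$ vanish on $\Tor(-,K)$ because the image of $\cdot x$ lies in $\mm\,(R/I)$; verifying that this forces the extremal Betti number of $R/I$ to survive is the delicate step, and is where the hypothesis that $x$ is a variable (rather than an arbitrary monomial) is genuinely used.
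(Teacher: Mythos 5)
The paper states this lemma without proof (it is the standard result quoted from Banerjee \cite{B1}, originally Dao--Huneke--Schweig, Lemma~2.10), so there is no in-paper argument to compare against; I am assessing your proposal on its own. The first half is correct and is the standard argument: the short exact sequence $0 \to (R/(I:m))(-d) \to R/I \to R/(I,m) \to 0$ is exact for the reasons you give, and the estimate $\reg(B)\le\max\{\reg(A),\reg(C)\}$ plus the two bookkeeping identities yields the inequality. (Minor edge case: if $m\in I$ then $(I:m)=R$ and your identity $\reg(J)=\reg(R/J)+1$ does not apply, but then $(I,m)=I$ and the claim is trivial.)

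The ``moreover'' half has a genuine gap, and the strategy you sketch cannot be repaired. You aim to show the connecting maps $\delta$ vanish, which would force $\reg(I)=\max\{\reg(I:x)+1,\reg(I,x)\}$. But that stronger equality is false: take $I=(xy,z^3)$ and the variable $x$. Then $\reg(I)=3$ and $(I,x)=(x,z^3)$ has regularity $3$, while $(I:x)=(y,z^3)$ has regularity $3$, so $\reg(I:x)+1=4>\reg(I)$. Here $\reg(I)$ equals \emph{one} of the two terms (namely $\reg(I,x)$) but not their maximum, so the connecting map does not vanish in the extremal degree. Your proposed mechanism also does not deliver $\delta=0$ even heuristically: since $x$ annihilates $\Coh{i}{R/(I,x)}$ and $\delta$ is $R$-linear, you only conclude that $x$ annihilates the \emph{image} of $\delta$, not that $\delta$ is zero. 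The actual proof of the ``moreover'' statement needs a different extra ingredient specific to monomial ideals, namely $\reg(I,x)\le\reg(I)$ (regularity does not increase when a variable is set to zero), combined with the refined form of the long-exact-sequence estimate: $\reg(B)=\max\{\reg(A),\reg(C)\}$ unless $\reg(A)=\reg(C)+1$. With $\alpha=\reg(I:x)$ and $\gamma=\reg(I,x)-1$, either $\alpha\neq\gamma+1$, in which case $\reg(I)$ equals the maximum and hence one of the two terms; or $\alpha=\gamma+1$, in which case the inequality gives $\reg(I)\le\reg(I:x)+1=\reg(I,x)+1$ while $\reg(I,x)\le\reg(I)$ pins $\reg(I)$ to $\reg(I,x)$ or $\reg(I,x)+1=\reg(I:x)+1$, again one of the two terms. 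Without the restriction inequality $\reg(I,x)\le\reg(I)$ (which itself requires proof, e.g.\ via Hochster's formula after polarization), the ``moreover'' part is not established.
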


\begin{Lemma}\cite{B1}
	Let $I$ and $J$ be monomial ideals in $R$ generated in degrees $n_1$ and $n_2$ respectively.
	Assume $J \subset I$ and $n_1 < n_2$ and $\G(I) = \{m_1,m_2,...,m_k\}$. 
	We put
	\[
	\begin{array}{rcl}
	A & = & \max \{\reg (J:m_1)+n_1\}, \\
	B & = & \max \{\reg ((J,m_1,..,m_l):m_{l+1})+n_1|1\leq l \leq {k-1}\}, \\
	C & = & \reg(I).
	\end{array}
	\]
	Then $\reg(J) \leq \max \{A,B,C\}$. 
\end{Lemma}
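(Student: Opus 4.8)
The plan is to bound $\reg(J)$ by peeling off the generators $m_1, \dots, m_k$ of $I$ one at a time and applying Lemma \ref{exact} at each step. Concretely, I would build the increasing chain of monomial ideals
\[
J \subseteq (J, m_1) \subseteq (J, m_1, m_2) \subseteq \cdots \subseteq (J, m_1, \dots, m_k),
\]
and observe that since $J \subseteq I = (m_1, \dots, m_k)$, the top of the chain is exactly $(J, m_1, \dots, m_k) = I$. Because $I$ is generated in degree $n_1$, every $m_i$ is a monomial of degree $n_1$, so Lemma \ref{exact} applies with $d = n_1$ at each link of the chain.

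First I would apply Lemma \ref{exact} to the ideal $J$ with the monomial $m = m_1$, which gives
\[
\reg(J) \leq \max\{\reg(J : m_1) + n_1,\ \reg(J, m_1)\} = \max\{A,\ \reg(J, m_1)\}.
\]
Then I would iterate: for each $l$ with $1 \leq l \leq k-1$, Lemma \ref{exact} applied to $(J, m_1, \dots, m_l)$ with the monomial $m_{l+1}$ yields
\[
\reg(J, m_1, \dots, m_l) \leq \max\{\reg((J, m_1, \dots, m_l) : m_{l+1}) + n_1,\ \reg(J, m_1, \dots, m_{l+1})\}.
\]
The first term on the right is, by definition, one of the quantities whose maximum is $B$. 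Substituting these bounds successively, the colon terms produced at the initial step and at the steps $l = 1, \dots, k-1$ are absorbed into $A$ and $B$ respectively, while the surviving ``tail'' term climbs the chain until it reaches $\reg(J, m_1, \dots, m_k) = \reg(I) = C$. Collecting everything gives $\reg(J) \leq \max\{A, B, C\}$, as claimed.

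Since each inequality is a direct instance of Lemma \ref{exact}, the argument is a clean finite induction on the length $k$ of the chain, with no genuine analytic obstacle. The only points requiring care are bookkeeping ones: verifying that each $m_i$ indeed has degree $n_1$ (so that the shift $+n_1$ is uniform across all steps), confirming that the chain terminates at $I$ rather than at a strictly larger ideal, and matching the colon ideals $(J, m_1, \dots, m_l) : m_{l+1}$ precisely to the terms defining $B$. The hypothesis $n_1 < n_2$ plays no essential role in the inequality itself; it reflects the intended application, and one may note that even if some $m_{l+1}$ already lies in $(J, m_1, \dots, m_l)$, the corresponding step of Lemma \ref{exact} remains valid, so the bound is unaffected.
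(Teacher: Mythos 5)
The paper states this lemma as a citation from \cite{B1} and gives no proof of its own; your argument --- iterating Lemma \ref{exact} along the chain $J \subseteq (J,m_1) \subseteq \cdots \subseteq (J,m_1,\dots,m_k) = I$ with the uniform shift $d = n_1$ --- is precisely the original proof in Banerjee's paper. The bookkeeping is correct (the colon terms land in $A$ and $B$, the tail terminates at $\reg(I) = C$ because $J \subseteq I$), so there is nothing to add.
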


\begin{Theorem}\cite{B1}\label{Blemma}
	Let $G$ be a graph and $I=I(G)$, and $\G(I^n) = (L_1^{(n)}, \ldots , L_m^{(n)})$. 
	For all $1 \leq k$ and for all $j \leq k \leq m - 1$, if $(L_j ^{(n)}: L_{k+1} ^{(n)} )$  is not contained in $(I^{n+1}:L_{k+1} ^ {(n)})$ then there exists $i \leq k$ such that $(L_i ^{(n)}: L_{k+1} ^{(n)})$ is generated by a variable and $(L_j ^{(n)} : L_{k+1} ^{(n)}) \subseteq (L_i ^{(n)} : L_{k+1} ^ {(n)} )$. 
\end{Theorem}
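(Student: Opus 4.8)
The plan is to translate everything into monomials, since every ideal in the statement is principal or monomial. Write $L := L_{k+1}^{(n)}$ and, for each $j$, set $w_j := L_j^{(n)}/\gcd(L_j^{(n)}, L)$, so that $(L_j^{(n)} : L) = (w_j)$. The identity $w_j\cdot L = \lcm(L_j^{(n)}, L)$ shows at once that the hypothesis $(L_j^{(n)} : L) \not\subseteq (I^{n+1} : L)$ is equivalent to $\lcm(L_j^{(n)}, L) \notin I^{n+1}$. Two structural facts frame the problem. First, $I \subseteq (I^{n+1} : L)$ always, since $I\cdot L = I^{n+1}$; so every edge of $G$ lies in $(I^{n+1}:L)$. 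Second, because $\deg(x_t L) = 2n+1$ is odd while $I^{n+1}$ is generated in degree $2n+2$, the colon $(I^{n+1}:L)$ has no degree-one generators and is generated in degree $\ge 2$. In particular, if $\deg w_j = 1$ then $w_j = x_t$ is already a variable and the hypothesis holds automatically; one may then take $i = j$. Thus I would reduce to the case $\deg w_j \ge 2$.

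Next I would pin down the shape of the desired $L_i^{(n)}$. The requirement $(L_i^{(n)} : L) = (x_t)$ forces $\gcd(L_i^{(n)}, L)$ to have degree $2n-1$, whence $L_i^{(n)} = x_t\,L/x_s$ for some variable $x_s \mid L$. Writing $L = f_1\cdots f_n$ as a product of $n$ edges and letting $x_s$ be an endpoint of a factor $f_l = \{x_s, x_r\}$, the monomial $x_t\,L/x_s$ is again a minimal generator of $I^n$ exactly when $\{x_r, x_t\} \in E(G)$; that is, $L_i^{(n)}$ is $L$ with one edge rerouted through $x_t$. Hence the construction succeeds as soon as we can choose a variable $x_t \mid w_j$ adjacent in $G$ to some vertex $x_r$ occurring in $L$ whose partner $x_s\neq x_t$ can be deleted.

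The heart of the argument is to produce such an $x_t$, and here I would argue the contrapositive using Banerjee's even-connection description of $(I^{n+1}:L)$, which is generated by the edges of $G$ together with the monomials $uv$ for $u,v$ even-connected with respect to the fixed factorization $L = f_1\cdots f_n$. Suppose no variable dividing $w_j$ admits a reroute. Comparing the exponent vectors of $L_j^{(n)}$ and $L$, one classifies the edge-factors of $L_j^{(n)}$ against the support of $L$: an edge of $L_j^{(n)}$ lying entirely in the surplus $w_j$ would already give a factor of $w_j$ in $I \subseteq (I^{n+1}:L)$, while a partially overlapping edge is precisely a first link of an even-connection walk. Assembling these walks, the unreroutable surplus would certify $w_j \in (I^{n+1}:L)$, contradicting $\lcm(L_j^{(n)}, L) \notin I^{n+1}$. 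This forces the existence of the reroute and hence of a generator $L_i^{(n)} = x_t\,L/x_s$ with $(L_i^{(n)} : L) = (x_t) \supseteq (w_j)$.

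I expect two points to be delicate. The main obstacle is exactly the crux above: making the even-connection accounting precise when the surplus of $w_j$ is carried by \emph{multiplicities} (a variable present in both $L_j^{(n)}$ and $L$ but to higher power), since then the reroute must be found through a chain of partial overlaps rather than a single surplus edge, and the failure of absorption must be localized to one variable admitting a direct graft. The secondary difficulty is the requirement $i \le k$: the rerouted generator must precede $L_{k+1}^{(n)}$ in the fixed ordering of $\G(I^n)$. I would handle this by working with the specific term-order-compatible ordering under which a generator sharing a larger $\gcd$ with $L$ receives a smaller index, and checking that $x_t\,L/x_s$, agreeing with $L$ in all but one variable, indeed comes before $L_{k+1}^{(n)}$; this is bookkeeping rather than new algebra.
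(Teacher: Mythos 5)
First, a point of reference: the paper contains no proof of this statement---it is quoted from Banerjee \cite{B1}---so your attempt can only be judged against the statement itself and the argument it is meant to summarize. Your overall reduction is the right one and is indeed Banerjee's: writing $(L_j^{(n)}:L)=(w_j)$ with $w_j=L_j^{(n)}/\gcd(L_j^{(n)},L)$, the only candidates for $L_i^{(n)}$ are the ``reroutes'' $x_tL/x_s$ with $x_t\mid w_j$, obtained by replacing one edge $\{x_s,x_r\}$ of a fixed factorization $L=f_1\cdots f_n$ by an edge $\{x_t,x_r\}\in E(G)$. However, you have made the easy step hard. Producing the reroute does not require the even-connection description of $(I^{n+1}:L)$ at all; the inclusion $I\subseteq (I^{n+1}:L)$ suffices. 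Indeed, if $x_t\mid w_j$ and $\{x_t,z\}$ is an edge of a factorization of $L_j^{(n)}$ with $z\notin\supp(L)$, then the exponent of $z$ in $w_j$ equals its exponent in $L_j^{(n)}$, so $x_tz\mid w_j$ and $w_j\in I$, contradicting the hypothesis; hence every such $z$ lies in $\supp(L)$. A short count of the multiplicities of the edges $\{x_t,z\}$ in the two factorizations (using that the exponent of $x_t$ in $L_j^{(n)}$ exceeds that in $L$) rules out the degenerate case in which every $f_l$ meeting such a $z$ equals $\{x_t,z\}$, which is exactly the case where the reroute would collapse back to $L$. So the ``heart'' you describe, with its walk-assembling and multiplicity bookkeeping, can be bypassed entirely.

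The genuine gap is the requirement $i\le k$, which you defer as bookkeeping. It is not bookkeeping: since the statement fixes no ordering on $\G(I^n)$, it is \emph{false as written}. Let $G$ be the disjoint union of the two paths with edge sets $\{\{x,p\},\{p,r\}\}$ and $\{\{y,q\},\{q,t\}\}$, let $n=2$, and enumerate $\G(I^2)$ starting with $L_1^{(2)}=(xp)(yq)$ and $L_2^{(2)}=(pr)(qt)$. Then $(L_1^{(2)}:L_2^{(2)})=(xy)$, and $xy\cdot pqrt\notin I^3$ because a product of three edges equal to $xypqrt$ would have to use $\{x,p\}$ to cover $x$ and $\{p,r\}$ to cover $r$, producing $p^2$; so the hypothesis holds with $j=k=1$, yet the only admissible index is $i=1$ and $(xy)$ is not generated by a variable. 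The reroutes $(xp)(qt)$ and $(yq)(pr)$ do exist, but nothing forces them to precede $L_2^{(2)}$ in an arbitrary enumeration. Any correct proof must therefore first recover the ordering hypothesis lost in transcription from \cite{B1} and then verify that some reroute $x_tL_{k+1}^{(n)}/x_s$ with $x_t\mid w_j$ precedes $L_{k+1}^{(n)}$ whenever $L_j^{(n)}$ does; your proposed fix---indexing by the size of $\gcd(\,\cdot\,,L)$---is not even a single total order, since it depends on which generator plays the role of $L$. Until that is pinned down, the proposal does not establish the statement.
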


\section{Main result}

In this section, we prove our main result Theorem \ref{main2}. This is implied by Theorem \ref{main1}. 

\begin{Lemma}\label{keylemma}
	Let $G$ be a graph and $U$ be a vertex cover of $G$. Suppose 
	$\G(I(G)^k) = \{L_{k_1}, \ldots, L_{k_l}\}$. Then 
	$(UI(G)^k : L_{k_i})$ is generated by variables for all $k \geq 0$ and $k_l \geq i \geq 1$. 
\end{Lemma}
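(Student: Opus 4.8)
The plan is to compute the colon ideal explicitly by peeling off the vertex-cover ideal $U=(x_c:x_c\in U)$ one variable at a time, and then to show that whatever generators survive must have degree one. First I would record the identity $UI(G)^k=\sum_{x_c\in U}x_cI(G)^k$, so that the colon distributes as $(UI(G)^k:L_{k_i})=\sum_{x_c\in U}(x_cI(G)^k:L_{k_i})$. Writing $f=L_{k_i}$ and using that $f\in I(G)^k$ (hence $(I(G)^k:f)=R$), a short monomial computation gives, for each $x_c\in U$,
\[
(x_cI(G)^k:f)=\begin{cases}(I(G)^k:f/x_c),& x_c\mid f,\\ (x_c),& x_c\nmid f.\end{cases}
\]
Summing, $(UI(G)^k:f)=(x_c:x_c\in U)+\sum_{x_c\in U,\ x_c\mid f}(I(G)^k:f/x_c)$. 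In particular $U\subseteq(UI(G)^k:f)$, so every minimal generator lying in $(U)$ is already a variable, and it remains only to control the minimal generators contributed by the ideals $(I(G)^k:f/x_c)$.

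Next I would reduce to showing that no minimal generator $w$ of the colon with $\deg w\ge 2$ is supported entirely on $V(G)\setminus U$: if $\supp(w)\cap U\neq\emptyset$ then $w$ is divisible by a variable of $U$, which already lies in the colon, so $w$ is not minimal. Here I would use the basic structural facts that, since $U$ is a vertex cover, $V(G)\setminus U$ is an independent set and every neighbor of a vertex outside $U$ lies in $U$, and that every edge occurring in $f$ meets $U$, so $\deg_U(f)\ge k$, where $\deg_U$ counts the total exponent on $U$-variables. The key sufficient criterion I would establish is: for $x_t\notin U$, if $f$ contains an edge $\{x_b,y\}$ with $x_b\in U$ and $y\in N_G(x_t)$, then $x_tf=x_b\,(\text{product of }k\text{ edges})\in UI(G)^k$, so $x_t$ itself belongs to the colon. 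Thus a degree-$\ge 2$ generator $w$ supported on $V(G)\setminus U$ would be killed as soon as some $t\in\supp(w)$ admits such an edge.

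To produce such an edge I would analyze $w=x_aL/\gcd(x_aL,f)$ with $x_a\in U$ and $L\in\mathcal{G}(I(G)^k)$. Since $\supp(w)\subseteq V(G)\setminus U$, the variable $x_a$ cannot appear in $w$, forcing $x_a\mid f$ with $\nu_a(f)>\nu_a(L)$; comparing $U$-degrees in $x_aL=w\cdot\gcd(x_aL,f)$ then yields $\deg_U(f)\ge\deg_U(\gcd(x_aL,f))=1+\deg_U(L)\ge k+1$. Because $\deg_U(f)>k$ and $f$ is a product of $k$ edges, at least one edge of $f$ has both endpoints in $U$; this is the seed I need. The hard part will be upgrading "some $U$-internal edge exists in $f$" to "a $U$-internal edge of $f$ is adjacent to $\supp(w)$": for each $t\in\supp(w)$ the factor $L$ supplies a neighbor $u_t\in N_G(x_t)\cap U$ with $u_t\mid f$, but $u_t$ need not itself be matched inside $U$ in $f$. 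I expect to close this gap with an alternating, even-connection style argument comparing the edge-decompositions of $f$ and $L$: starting from the excess $a$-vertex of $f$ and the excess $t$-vertices of $L$, one walks along an alternating sequence of $f$- and $L$-edges (whose endpoints outside $U$ all lie in the independent set $V(G)\setminus U$) until it reaches a $U$-internal edge of $f$ meeting a neighbor of $\supp(w)$. This alternating-path bookkeeping is the main obstacle; the base case $k=0$, where $I(G)^0=R$ and the colon is just $U$, is immediate.
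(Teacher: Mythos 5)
Your reductions up to the last step are sound: the decomposition
$(UI(G)^k:f)=(x_c:x_c\in U)+\sum_{x_c\in U,\ x_c\mid f}(I(G)^k:f/x_c)$ is correct, it does follow that a minimal generator $w$ of degree at least $2$ must be supported on the independent set $V(G)\setminus U$, your sufficient criterion for a variable $x_t\notin U$ to lie in the colon is verified correctly, and the excess-degree count $\deg_U(f)\ge k+1$ forcing an edge of $f$ with both endpoints in $U$ is also right. But the proof is not complete: the entire content of the lemma is concentrated in the step you defer, namely producing, for some $x_t\in\supp(w)$, an edge $\{x_b,y\}$ in the chosen edge decomposition of $f$ with $x_b\in U$ and $y\in N_G(x_t)$. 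You say you ``expect to close this gap with an alternating, even-connection style argument'' and call the bookkeeping ``the main obstacle''; as written there is no argument there, only a plan. The two facts you do establish --- that $f$ has some $U$-internal edge, and that each $t\in\supp(w)$ has a $U$-neighbor $u_t$ appearing in $L$ --- do not by themselves connect: $u_t$ need not divide $f$, and the $U$-internal edge of $f$ need not meet $N_G(t)$. So the conclusion genuinely depends on the alternating-walk analysis you have not carried out, and a proof that stops where the difficulty begins has a gap.

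For comparison, the paper sidesteps this global combinatorial analysis entirely by inducting on $k$: writing $L_{k_i}=e_1\cdots e_k$ and taking a minimal generator $m$ with $\deg m\ge 2$, it massages the divisibility $uf_1\cdots f_k\mid me_1\cdots e_k$ (exchanging one $f_j$ for another edge through a shared vertex where necessary) into $uf_2\cdots f_k\mid me_2\cdots e_k$, so that $m\in(UI(G)^{k-1}:e_2\cdots e_k)$; by the induction hypothesis that ideal is generated by variables, and since it is contained in $(UI(G)^k:L_{k_i})$, minimality forces $m$ itself to be a variable. If you want to complete your version, the cleanest route is probably to import this one-edge-at-a-time reduction rather than attempt the full even-connection combinatorics in one shot; otherwise you must actually write down and verify the alternating-path argument, including why the walk terminates at an edge of $f$ adjacent to $N_G(\supp(w))$ with an endpoint in $U$.
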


\begin{proof}
	This proof is based on the argument of Theorem 6.1 in \cite{B1}.
	We prove by induction on $k$. 
	If $k = 0$, then it is obviously. 
	For all $k \geq 1$ and $k_l \geq i \geq 1$, $L_i$ can be written as \[L_i = e_1e_2 \cdots e_k,  \text{\, where\, } e_j \in E(G).\]
	Let $m$ be a minimal monomial generator of $(UI(G)^k : L_{k_i})$
	Now $me_1e_2 \cdots e_k$ is divisible by a monomial $f = uf_1f_2 \cdots f_k$, where $f_j \in E(G)$ and $u \in U$. 
	If $\deg(m) \geq 2$, there are $i$ and $e_i = pq$ such that 
	$f_1f_2 \cdots f_k|me_1 \cdots e_{i-1}qe_{i+1} \cdots e_k$.
	
	Without loss of generality, we may assume $e_i = e_1$.
	If $q = u$, then $f_1 \cdots f_k|me_2 \cdots e_k$. 
	If $q|f_1f_2 \cdots f_k$, we have $f_1 = p'q$. 
	It follows that $uf_2 \cdots f_k$ divides $me_2 \cdots e_k$. 
	If $q \nmid uf_1 \cdots f_k$, then $uf_1 \cdots f_k|me_2 \cdots e_k$. 
	Therefore $uf_2 \cdots f_k$ divides $me_2 \cdots e_k$. 
	Hence for each case, $m$ is a variable by the assumption of induction. 
\end{proof}

\begin{Theorem}\label{main1}
	Let $G$ be a gap-free graph and $S \subset V(G)$ be an independent set of $G$, 
	and $G^S$ be an $S$-suspension of $G$. If $I(G)^k$ has a linear resolution for all $k \geq 2$, 
	then $I(G^S)^k = I(G)^k + JI(G^S)^{k-1}$ is a Betti splitting, where $J = (x_0x_i : x_i \notin S)$. 
\end{Theorem}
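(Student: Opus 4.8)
The plan is to apply Lemma~\ref{doublelinear} to the splitting $I(G^S)^k = I(G)^k + JI(G^S)^{k-1}$. Since $I(G^S) = I(G) + J$, expanding $(I(G)+J)^k$ gives the ideal equality at once: the pure power $I(G)^k$ is the only summand with no factor of $J$, and every other summand $I(G)^aJ^b$ with $b\ge 1$ lies in $JI(G^S)^{k-1}$. First I would verify the disjoint-union hypothesis on generators. All three ideals are generated in the single degree $2k$; every element of $\G(I(G)^k)$ is a monomial in $x_1,\dots,x_n$, whereas every element of $\G(JI(G^S)^{k-1})$ is divisible by $x_0$. In a fixed degree no monomial properly divides another, so $\G(I(G^S)^k)=\G(I(G)^k)\sqcup\G(JI(G^S)^{k-1})$, which is exactly the disjointness required by Lemma~\ref{doublelinear}.

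It then remains to show that both summands have linear resolutions. The summand $I(G)^k$ is linear by hypothesis, so everything reduces to the second summand. Here I would factor out the new variable: as $J=x_0U$ with $U=(x_i : x_i\notin S)$, we have $JI(G^S)^{k-1}=x_0\cdot UI(G^S)^{k-1}$, and multiplication by the single variable $x_0$ gives $x_0\,\mathfrak a\cong\mathfrak a(-1)$, so $\beta_{i,j}(JI(G^S)^{k-1})=\beta_{i,j-1}(UI(G^S)^{k-1})$ and linearity transfers. The decisive structural remark is that $U=V(G)\setminus S$ is a vertex cover not only of $G$ (because $S$ is independent) but also of $G^S$: each edge of $G^S$ is either an edge of $G$ or a star edge $\{x_0,x_i\}$ with $x_i\notin S$, and either way it meets $U$. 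Thus $UI(G^S)^{k-1}$ is exactly an ideal of the shape treated by Lemma~\ref{keylemma}.

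To finish I would prove that $UI(G^S)^{k-1}$ has linear quotients, which forces a linear resolution since it is generated in the single degree $2k-1$. Writing its generators as $x_aL$ with $x_a\in U$ and $L\in\G(I(G^S)^{k-1})$, I would order them so that $L$ ranges over a fixed order of $\G(I(G^S)^{k-1})$ and $x_a$ is refined inside each block, and then examine the colon of each generator by its predecessors. Predecessors sharing the same $L$ immediately contribute variable colons, and the remaining colons are controlled by Lemma~\ref{keylemma}, which guarantees that $(UI(G^S)^{k-1}:L)$ is generated by variables for every $L\in\G(I(G^S)^{k-1})$, together with the dichotomy of Banerjee's Theorem~\ref{Blemma}, which replaces any colon not yet of this form by a variable-generated colon dominating it. With all colon ideals generated by variables, $UI(G^S)^{k-1}$ and hence $JI(G^S)^{k-1}$ is linear, and Lemma~\ref{doublelinear} yields the Betti splitting.

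The main obstacle is precisely this last step. The colon of two generators whose edge factorizations differ genuinely need not be generated by variables on its own, and the heart of the argument is to show that it is always dominated by one of the variable-generated colons supplied by Lemma~\ref{keylemma} and Theorem~\ref{Blemma}. Tracking how the added vertex $x_0$ enters the even-connected generators of $I(G^S)^{k-1}$, and verifying that Lemma~\ref{keylemma} indeed accounts for every colon that arises, is where the real work of the proof lies.
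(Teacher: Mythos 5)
Your proposal follows the paper's skeleton almost exactly up to the decisive point: the disjointness of generator sets, the reduction via Lemma~\ref{doublelinear} to showing that $JI(G^S)^{k-1}$ is linear, the passage from $JI(G^S)^{k-1}$ to $T=(x_i : x_i\notin S)\,I(G^S)^{k-1}$ by stripping off $x_0$ (the paper does this with Lemma~\ref{exact} and the colon $(JI(G^S)^{k-1}:x_0)$, you do it by the degree shift $x_0\mathfrak a\cong\mathfrak a(-1)$ --- both are fine), and the observation that $U=V(G)\setminus S$ is a vertex cover of $G^S$, which is what makes Lemma~\ref{keylemma} applicable. But you then stop at exactly the step that carries the content of the theorem: you propose to give $T$ linear quotients, concede that the colon of two generators of $T$ ``need not be generated by variables on its own,'' and defer the resolution of this to ``the real work of the proof.'' That is a genuine gap, not a routine verification; as written, the linearity of $T$ is asserted rather than proved.

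The paper closes this gap without ever ordering the generators of $T$ or producing linear quotients. Instead it uses the other (unlabelled) lemma quoted from \cite{B1}: if $J\subset I$ are monomial ideals generated in degrees $n_2>n_1$, then $\reg(J)\le\max\{\reg((J,m_1,\dots,m_l):m_{l+1})+n_1,\ \reg(I)\}$, the colons being taken with respect to the generators $m_i$ of the \emph{larger, lower-degree} ideal $I$. Here one takes $J=T$ (degree $2k-1$) sitting inside $I=I(G^S)^{k-1}$ (degree $2k-2$), so the colons that must be controlled are $(T:L)$ for $L\in\G(I(G^S)^{k-1})$ --- precisely the ideals that Lemma~\ref{keylemma} shows are generated by variables, hence of regularity $1$. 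This yields $\reg(T)\le\max\{1+2(k-1),\ \reg(I(G^S)^{k-1})\}=2k-1$ directly, and the problematic colons between two generators of $T$ with genuinely different edge factorizations simply never arise. So the missing idea in your write-up is the change of reference ideal: colon $T$ against the generators of $I(G^S)^{k-1}$ via Banerjee's regularity lemma rather than against its own generators via linear quotients. (If you do pursue this route, note the same care the paper elides: the lemma's colons are $((T,L_1,\dots,L_l):L_{l+1})$, and the extra terms $(L_i:L_{l+1})$ are absorbed using Theorem~\ref{Blemma} together with the containment $I(G^S)^k\subseteq T$, which again follows from $U$ being a vertex cover of $G^S$.)
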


\begin{proof}
	It is obvious that $\G(I(G^S)^k)$ is a disjoint union of $\G(I(G)^k)$ and $\G(JI(G^S)^{k-1})$. 
	By Lemma \ref{doublelinear}, we may only show that $JI(G^S)^{k-1}$ has a linear resolution. 
	By Lemma \ref{exact} and $(JI(G^S)^{k-1}:x_0) = (x_i : x_i \notin S)I(G^S)^{k-1}$, 
	$JI(G^S)^{k-1}$ has a linear resolution if and only if $(x_i : x_i \notin S)I(G^S)^{k-1}$ has a linear resolution. 
	We denote $(x_i : x_i \notin S)I(G^S)^{k-1}$ by $T$. 
	Suppose $\G(I(G)^{k-1}) = \{L_1, \ldots , L_m\}$. 
	Because $S$ is an independent set of $G$, $\{x_i : x_i \notin S\} \subset V(G^S)$ is a vertex cover of $G^S$. 
	Hence $I(G)^{k} \subset T$.  
	By Theorem \ref{Blemma}, \[\reg(T) \leq \max \{\reg((T:L_j) + 2(k - 1), \reg(I(G)^{k-1})\}.\]
	According to Theorem \ref{keylemma}, for all $1 \leq j \leq m$, $(T:L_j)$ is generated by variables. 
	It implies that \[\reg(T) \leq \max \{2k-1, 2(k-1)\}.\]
	Therefore $T$ has a linear resolution. 
\end{proof}

\begin{Theorem}\label{main2}
	Let $G$ be a gap-free graph and $S \subset V(G)$ be an independent set of $G$, 
	and $G^S$ be an $S$-suspension of $G$.
	If $I(G)^k$ has a linear resolution for all $k \geq 2$, 
	then $I(G^S)^k$ has a linear resolution for all $k \geq 2$. 
\end{Theorem}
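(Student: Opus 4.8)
The plan is to derive Theorem~\ref{main2} from Theorem~\ref{main1} together with the regularity formula for Betti splittings recorded in the definition of Betti splitting. By Theorem~\ref{main1}, under the hypothesis the decomposition $I(G^S)^k = I(G)^k + J I(G^S)^{k-1}$ with $J = (x_0 x_i : x_i \notin S)$ is a Betti splitting, so
\[
\reg\bigl(I(G^S)^k\bigr) = \max\bigl\{\, \reg(I(G)^k),\ \reg(J I(G^S)^{k-1}),\ \reg\bigl(I(G)^k \cap J I(G^S)^{k-1}\bigr) - 1 \,\bigr\}.
\]
Since $I(G^S)^k$ is generated in degree $2k$, one always has $\reg(I(G^S)^k) \geq 2k$, so it suffices to bound each of the three terms on the right by $2k$; this forces $\reg(I(G^S)^k) = 2k$, that is, a linear resolution.

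The first two terms are immediate. One has $\reg(I(G)^k) = 2k$ by hypothesis. For the second, the proof of Theorem~\ref{main1} already shows that $J I(G^S)^{k-1}$ has a linear resolution, and since it is generated in degree $2k$ this gives $\reg(J I(G^S)^{k-1}) = 2k$.

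The remaining work, and the main point, is to show $\reg\bigl(I(G)^k \cap J I(G^S)^{k-1}\bigr) \leq 2k+1$. I would compute the intersection explicitly. Writing $J = x_0 (x_i : x_i \notin S)$, we have $J I(G^S)^{k-1} = x_0 T$, where $T = (x_i : x_i \notin S) I(G^S)^{k-1}$ is the ideal analyzed in Theorem~\ref{main1}. Since $x_0$ is the new vertex and hence divides no generator of $I(G)$, one has $(I(G)^k : x_0) = I(G)^k$, and this colon identity yields $I(G)^k \cap x_0 T = x_0\bigl(I(G)^k \cap T\bigr)$. Moreover, because $S$ is independent in $G$, the complement $\{x_i : x_i \notin S\}$ is a vertex cover of $G$, so $I(G) \subseteq (x_i : x_i \notin S)$ and therefore $I(G)^k \subseteq T$ (exactly as observed in the proof of Theorem~\ref{main1}). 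Consequently $I(G)^k \cap T = I(G)^k$, and the intersection collapses to
\[
I(G)^k \cap J I(G^S)^{k-1} = x_0\, I(G)^k .
\]
Multiplication by the variable $x_0$ is an isomorphism onto a degree-one shift, so $\reg\bigl(x_0 I(G)^k\bigr) = \reg(I(G)^k) + 1 = 2k+1$, giving the required bound.

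Combining the three estimates gives $\reg(I(G^S)^k) = \max\{2k, 2k, 2k\} = 2k$, so $I(G^S)^k$ has a linear resolution for all $k \geq 2$. I expect the main obstacle to be precisely the intersection term, and the plan reduces it to two clean facts already in hand: the colon identity $(I(G)^k : x_0) = I(G)^k$, valid because $x_0$ is a fresh vertex, and the containment $I(G)^k \subseteq T$ coming from the vertex-cover property of the complement of $S$. The only genuine care needed is to verify that the passage from $J I(G^S)^{k-1}$ to $x_0 T$ and the subsequent colon computation are true equalities of monomial ideals rather than mere containments; once the intersection is identified as $x_0 I(G)^k$, the term-by-term bounds are routine.
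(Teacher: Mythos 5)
Your proposal is correct and follows essentially the same route as the paper: invoke Theorem~\ref{main1} to get the Betti splitting, identify the intersection $I(G)^k \cap JI(G^S)^{k-1}$ as $x_0 I(G)^k$ via the vertex-cover property of $\{x_i : x_i \notin S\}$, and conclude from the Betti-splitting regularity formula. Your write-up is in fact more careful than the paper's at the one delicate point (justifying the equality of the intersection with $x_0 I(G)^k$ rather than asserting it), but the argument is the same.
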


\begin{proof}
	By \ref{main1}, if $I(G)^k \cap JI(G^S)^{k-1}$ has a linear resolution, $I(G^S)^k$ has a linear resolution. 
	Because $\{x_i : x_i \notin S\} \subset V(G^S)$ is a vertex cover of $G^S$, 
	we have $I(G)^k \cap JI(G^S)^{k-1} = x_0I(G)^k$. 
	By Lemma \ref{exact}, $x_0I(G)^k$ has a linear resolution. 
\end{proof}

\section{Conjectures}

The following is known as a generalization of Conjecture \ref{NPconj}. 

\begin{Conjecture}\label{generalNP}
	Let $G$ be a gap-free graph and $I(G)$ its edge ideal. Suppose $\reg(I(G)) = n$.
	Is it true that $I(G)^s$ has a linear resolution for all $s \geq n - 1$? 
\end{Conjecture}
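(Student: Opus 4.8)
The plan is to lift the inductive argument that proves Theorem \ref{Theoconj} from the level $\reg(I(G)) = 3$ to an arbitrary level $\reg(I(G)) = n$. That argument rests on three pillars: a lifting theorem for an $(\im, \reg)$-invariant extension (Theorem \ref{main2} for the $S$-suspension), a reduction showing that any gap-free $G$ with $\reg(I(G)) = 3$ arises from a smaller such graph by a single invariant extension, and a base case (anticycles) where the linearity of all powers is already available through the cricket-free result of \cite{B1}. Accordingly I would first isolate the level-$n$ analogue of Conjecture \ref{Newconj1}: if $G'$ is an $(\im, \reg)$-invariant extension of a gap-free graph $G$ with $\reg(I(G)) = \reg(I(G')) = n$, and $\reg(I(G)^s) = 2s$ for all $s \geq n-1$, then $\reg(I(G')^s) = 2s$ for all $s \geq n-1$. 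Exactly as in Theorem \ref{Theoconj}, an induction on the number of vertices would then reduce Conjecture \ref{generalNP} to this statement together with a base case.

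The lifting step itself is where I would try to reuse the machinery of Section 3 essentially unchanged. For the $S$-suspension one writes $I(G^S)^s = I(G)^s + J\,I(G^S)^{s-1}$ with $J = (x_0 x_i : x_i \notin S)$ and invokes the Betti-splitting criterion of Lemma \ref{doublelinear}. The crucial input, Lemma \ref{keylemma}, states that $(U\,I(G)^k : L_{k_i})$ is generated by variables for any vertex cover $U$; this computation is purely combinatorial and degree-independent, so it survives verbatim at level $n$. Feeding it through Theorem \ref{Blemma} and Lemma \ref{exact} bounds the regularity of $T = (x_i : x_i \notin S)\,I(G^S)^{s-1}$ by $\max\{2s-1,\ \reg(I(G)^{s-1})\}$, which forces a linear resolution as soon as $\reg(I(G)^{s-1}) \leq 2s-1$. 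For $s \geq n$ this is immediate from the inductive hypothesis, but at the threshold $s = n-1$ it requires $\reg(I(G)^{n-2}) \leq 2n-3$, a bound not guaranteed by the conjecture's hypothesis, which only controls powers from $n-1$ on. So the honest statement of the lifting lemma would have to carry the extra hypothesis $\reg(I(G)^{n-2}) \leq 2(n-2)+1$, and verifying that this is preserved under extension becomes part of the work.

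I expect the genuine obstacle to lie in the base case rather than in the lifting. For $n = 3$ the minimal gap-free graphs with $\reg(I(G)) = 3$ are the anticycles $\overline{C_\ell}$ with $\ell \geq 5$, and these are simultaneously gap-free and cricket-free, so \cite{B1} delivers $\reg(I(G)^k) = 2k$ for all $k \geq 2$ at no cost. No analogue of this coincidence is known for $n > 3$: there is neither a classification of the minimal gap-free graphs of regularity $n$ nor a general theorem asserting that their powers beyond $n-2$ are linear, and the cricket-free shortcut is unavailable because such graphs force $\reg(I(G)) \leq 3$. A second, subtler difficulty is the reduction step: for $n = 3$ deleting a vertex while keeping $\reg = 3$ and $\im = 1$ is comparatively easy, but for larger $n$ it is not clear that a gap-free graph of regularity $n$ always admits a one-vertex induced subgraph of the same regularity, so the induction may have to be reorganized around a coarser invariant, for instance the distance from a fixed minimal witness of regularity $n$, rather than the raw vertex count.
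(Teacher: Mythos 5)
The statement you were asked to prove is an open conjecture, and the paper does not prove it either: its entire treatment is the one-line remark that Conjecture \ref{generalNP} would follow from Conjectures \ref{Newconj2} and \ref{Newconj3} ``by an argument similar to that of Theorem \ref{Theoconj}.'' Your proposal is, in substance, exactly that conditional reduction written out: your ``level-$n$ analogue of Conjecture \ref{Newconj1}'' is the paper's Conjecture \ref{Newconj2}, and the missing base case you identify --- a family of minimal gap-free graphs of regularity $n$ whose powers from $n-1$ on are linear, playing the role the anticycles play for $n=3$ --- is precisely the paper's Conjecture \ref{Newconj3}. So you have not produced a proof, but neither has the paper, and you are candid about where the argument is conditional; on the structural level your route and the paper's coincide.

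Two of your observations are worth keeping because the paper glosses over them. First, the threshold issue at $s = n-1$: pushing $T = (x_i : x_i \notin S)\,I(G^S)^{s-1}$ through Theorem \ref{Blemma} requires $\reg(I(G)^{s-1}) \leq 2s-1$, and at $s = n-1$ this asks for control of $\reg(I(G)^{n-2})$, which the conjectural hypothesis (powers $\geq n-1$) does not supply; for $n=3$ it is rescued by the accident $\reg(I(G)^1) = 3 = 2\cdot 2 - 1$, but for $n > 3$ an extra hypothesis or a known bound such as $\reg(I(G)^q) \leq 2q + \reg(I(G)) - 2$ would be needed, and the latter is itself not strong enough when $n \geq 4$. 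Second, the reduction step: even granting both conjectures, the induction in Theorem \ref{Theoconj} needs every gap-free graph of regularity $n$ to contain a one-vertex-smaller induced subgraph of the same regularity and induced matching number, which is asserted without proof even in the $n=3$ case and is genuinely unclear in general. These are real gaps in the program, not in your write-up; your proposal correctly locates them rather than papering over them.
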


We can consider two conjectures for Conjecture \ref{generalNP} as well as \ref{NPconj}. 

\begin{Conjecture}\label{Newconj2}
	Let $G=(V(G), E(G))$ be a gap-free graph and $G'$ be an $(\im, \reg)$-invariant extension of $G$. 
	If there exists an integer $c_G$ such that $\reg\left(I(G)^k\right) = 2k$ for all $k \geq c_G$, 
	is it true that $\reg\left(I(G')^k\right) = 2k$ for all $k \geq c_G$ ?
\end{Conjecture}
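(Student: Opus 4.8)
A natural line of attack on Conjecture \ref{Newconj2} is to imitate the Betti-splitting strategy of Theorems \ref{main1} and \ref{main2}, now for an arbitrary $(\im,\reg)$-invariant extension rather than only for an $S$-suspension. Write $x_{n+1}$ for the new vertex, put $N = N_{G'}(x_{n+1}) \subseteq V(G)$ and $J = (x_{n+1}w : w \in N)$, so that $I(G') = I(G) + J$ and the minimal generators of $I(G')^k$ split as the disjoint union of those not divisible by $x_{n+1}$, namely $\G(I(G)^k)$, and those divisible by $x_{n+1}$, namely $\G(JI(G')^{k-1})$. Thus $I(G')^k = I(G)^k + JI(G')^{k-1}$, exactly as in Theorem \ref{main1}. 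I would argue by induction on $k \ge c_G$: assuming $I(G')^{k-1}$ has a linear resolution, first show $JI(G')^{k-1}$ has a linear resolution, deduce from Lemma \ref{doublelinear} that the decomposition is a Betti splitting, and then, as in Theorem \ref{main2}, reduce the linearity of $I(G')^k$ to that of the intersection $I(G)^k \cap JI(G')^{k-1}$ via Lemma \ref{exact}.

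For the first reduction, since $(JI(G')^{k-1} : x_{n+1}) = (w : w \in N)\,I(G')^{k-1}$, Lemma \ref{exact} reduces the linearity of $JI(G')^{k-1}$ to that of $T = (w : w \in N)\,I(G')^{k-1}$. Writing $\G(I(G')^{k-1}) = \{L_1,\dots,L_m\}$, the goal is to show that each colon $(T : L_j)$ is generated by variables, after which the regularity bound used in Theorem \ref{main1} gives $\reg(T) \le \max\{2k-1,\ \reg(I(G)^{k-1})\}$ and hence a linear resolution, provided $\reg(I(G)^{k-1}) \le 2k-1$. This is exactly where Theorem \ref{main1} invoked Lemma \ref{keylemma}: there $\{x_i : x_i \notin S\} = V(G)\setminus S$ is a \emph{vertex cover} of $G^S$ because $S$ is independent, so Lemma \ref{keylemma} applies verbatim and $(T:L_j)$ is generated by variables.

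The crux, and the step I expect to be the main obstacle, is precisely this vertex-cover property. For a general $(\im,\reg)$-invariant extension the neighborhood $N = N_{G'}(x_{n+1})$ need not cover every edge of $G$: the hypotheses only force $G'$ to be gap-free (since $\im(G') = \im(G) = 1$), which merely guarantees that whenever $\{a,b\} \in E(G)$ with $a,b \notin N$, every $w \in N$ is adjacent in $G$ to $a$ or to $b$. When $N$ does happen to be a vertex cover of $G$ the argument above goes through unchanged, giving a partial result; so the real content is the complementary case. I would therefore concentrate on either (i) showing that the full force of $(\im,\reg)$-invariance, and in particular $\reg(I(G')) = \reg(I(G))$, forces $N$ (possibly after relabeling) to be a vertex cover of $G$, or (ii) strengthening Lemma \ref{keylemma} so that $((w:w\in N)I(G')^{k-1} : L_j)$ is still generated by variables under the weaker adjacency condition above. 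Two secondary issues also need care: identifying the intersection $I(G)^k \cap JI(G')^{k-1}$ in general (in the suspension case it collapsed to $x_{n+1}I(G)^k$, which is linear because $I(G)^k$ is, again by Lemma \ref{exact}), and aligning the induction with the threshold, since at the base case $k = c_G$ the term $JI(G')^{c_G-1}$ involves $I(G')^{c_G-1}$ and the bound needs $\reg(I(G)^{c_G-1}) \le 2c_G - 1$, neither of which is granted by the hypotheses. Once $T$ is controlled via (i) or (ii), the remaining reductions should follow the pattern of Theorems \ref{main1} and \ref{main2}.
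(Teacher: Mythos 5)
First, a point of calibration: the statement you are proving is Conjecture \ref{Newconj2}, which the paper poses as an \emph{open question} and does not prove; there is no proof in the paper to compare against. Your text is accordingly a program rather than a proof, and you are candid about this, but the gap you flag is genuine and is not closed by either of the two escape routes you sketch. The entire machinery of Theorem \ref{main1} rests on the fact that for an $S$-suspension the neighborhood of the new vertex is $V(G)\setminus S$, which is a vertex cover of $G^S$ because $S$ is independent. This is used twice in an essential way: once so that Lemma \ref{keylemma} applies to make each colon $(T:L_j)$ generated by variables, and once (implicitly) to get the containment $I(G)^k \subset T$ without which Theorem \ref{Blemma}, as stated, cannot even be invoked, since it requires $J\subset I$. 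For a general $(\im,\reg)$-invariant extension the neighborhood $N=N_{G'}(x_{n+1})$ can be very far from a vertex cover: attach a single pendant vertex to one vertex of a complete graph $K_n$, $n\ge 3$; the result is an $(\im,\reg)$-invariant extension (both graphs are co-chordal, so both have $\im=1$ and $\reg(I)=2$), yet $N$ is a single vertex and no relabeling makes it a vertex cover of $K_n$. So your option (i) is false as stated, and option (ii) is at present only a hope, not an argument.

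Two further points. Your identification of the intersection, $I(G)^k\cap JI(G')^{k-1}=x_{n+1}I(G)^k$, is again a consequence of the vertex-cover property in the suspension case and will not persist in general, so even if the Betti splitting were established, the reduction of $\reg(I(G')^k)$ to known quantities would remain open. And your observation about the induction threshold is correct and cuts against the strategy: at $k=c_G$ the argument consumes $\reg(I(G')^{c_G-1})$ and $\reg(I(G)^{c_G-1})$, neither of which is controlled by the hypothesis, so the induction as set up does not start. In summary, you have correctly reverse-engineered the proof pattern of Theorems \ref{main1} and \ref{main2} and correctly located why it does not extend; but no step beyond the $S$-suspension case is actually established, and the conjecture remains open after your proposal exactly as it was before it.
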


\begin{Conjecture}\label{Newconj3}
	For all integers $n \geq 2$, is there the family of graphs $\{G^{(n)}_i\}$ such that 
	$I(G^{(n)}_i)^k$ has a linear resolution for all $k \geq n - 1$ and 
	if $G$ is a gap-free graph such that $\reg(I(G)) = n$, 
	then $G^{(n)}_i$ is an induced subgraph of $G$. 
\end{Conjecture}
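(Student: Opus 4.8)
The plan is to imitate the two–step structure already visible in the proof of Theorem~\ref{Theoconj}, which settled the case $n=3$: there the family was the set of anticycles of length at least $5$, which are gap-free and cricket-free, so Banerjee's theorem \cite{B1} supplies power-linearity, while a structural fact guarantees that every gap-free $G$ with $\reg(I(G))=3$ contains such an anticycle as an induced subgraph. Accordingly I would separate the statement into (i) exhibiting a family $\{G^{(n)}_i\}$ that covers, in the induced-subgraph order, every gap-free $G$ with $\reg(I(G))=n$, and (ii) proving that each member satisfies that $I(G^{(n)}_i)^k$ has a linear resolution for all $k\ge n-1$.

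For part (i) the cleanest candidate is to take $\{G^{(n)}_i\}$ to be the graphs that are minimal, in the induced-subgraph order, among gap-free graphs $H$ with $\reg(I(H))=n$. With this choice (i) is automatic: regularity of edge ideals does not increase under passing to induced subgraphs, so for gap-free $G$ with $\reg(I(G))=n$ every induced subgraph has regularity at most $n$; picking $H\subseteq G$ minimal among induced subgraphs with $\reg(I(H))=n$, every proper induced subgraph of $H$ has regularity strictly below $n$, whence $H$ is globally minimal and lies in the family, and $H$ is gap-free as an induced subgraph of a gap-free graph. For $n=3$ one may instead use the more explicit family of anticycles of length at least $5$, as in Theorem~\ref{Theoconj}.

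This reduces the whole conjecture to part (ii): every minimal gap-free graph $H$ with $\reg(I(H))=n$ has $I(H)^k$ with a linear resolution for all $k\ge n-1$. I would attack this on two fronts. Structurally, Hochster's formula characterizes $\reg(I(H))=n$ through the reduced homology of the independence complexes of the induced subgraphs of $H$, the extremal contribution sitting in homological degree $n-2$; minimality should then force these obstructions into a combinatorial normal form that is the higher–regularity analogue of an anticycle, and the task becomes to isolate the correct replacement for the cricket-free hypothesis, namely a forbidden-induced-subgraph condition obeyed by all such minimal $H$ and strong enough to imply linearity of every power. Computationally, I would follow Theorem~\ref{main1}: realize $I(H)^k$ via a Betti splitting and control the colon ideals $(I(H)^k:L)$ by the linear-quotient mechanism of Theorem~\ref{Blemma} together with Lemma~\ref{keylemma}, so that $\reg(I(H)^k)$ is driven down to $\reg(I(H)^{k-1})$ and the regularity of ideals generated by variables.

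The hard part is precisely part (ii) for $n\ge 4$. When $n=3$ the entire difficulty was discharged by one external theorem, that gap-free and cricket-free forces power-linearity \cite{B1}; no such statement is known in higher regularity, and the minimal gap-free graphs of regularity $n$ have not been classified for $n\ge 4$. Moreover part (ii) for the minimal-obstruction family is itself a consequence of Conjecture~\ref{generalNP}, so any unconditional proof must inject genuinely new information: either a forbidden-subgraph criterion forcing all powers to have linear resolutions, or a direct homological computation of $\reg(I(H)^k)$ for the minimal obstructions. Either route appears to require more than the suspension and Betti-splitting techniques assembled here.
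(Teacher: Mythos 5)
This statement is posed in the paper as an open \emph{conjecture}, not a theorem: the paper offers no proof of it, and only records that Conjecture~\ref{Newconj3} together with Conjecture~\ref{Newconj2} would imply Conjecture~\ref{generalNP} by an argument parallel to Theorem~\ref{Theoconj}. So there is no proof in the paper to compare yours against, and your proposal should be judged on its own terms as an attempted proof of an open problem.

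On those terms it has a genuine, and indeed self-acknowledged, gap. Your part (i) is sound as far as it goes: taking $\{G^{(n)}_i\}$ to be the induced-subgraph-minimal gap-free graphs of regularity $n$ does satisfy the covering condition, since regularity of edge ideals does not increase under restriction to induced subgraphs and induced subgraphs of gap-free graphs are gap-free. But this choice makes the covering condition tautological precisely by pushing the entire content of the conjecture into part (ii): the claim that every minimal gap-free graph $H$ with $\reg(I(H))=n$ has $I(H)^k$ linear for all $k\ge n-1$ is just Conjecture~\ref{generalNP} restricted to minimal graphs, which is still wide open for $n\ge 4$ (the minimal obstructions are not even classified). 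The $n=3$ case you model this on worked only because Banerjee's theorem on gap-free, cricket-free graphs supplied the linearity of powers for anticycles from outside; no analogue exists for higher $n$, and neither Hochster's formula nor the Betti-splitting machinery of Theorems~\ref{main1} and~\ref{Blemma} is shown to close that gap. So the proposal is a reasonable reformulation and research plan, but it does not prove the statement; it reduces the conjecture to an equivalent open problem.
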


If Conjecture \ref{Newconj2} and Conjecture \ref{Newconj3} hold, Conjecture \ref{generalNP} holds too.
The proof is similar to that of Theorem \ref{Theoconj}.

\end{document}